\theoremstyle{definition}
\newtheorem{remark}{Remark}
\theoremstyle{theorem}
\newtheorem{lemma}{\bf Lemma}
\newtheorem{proposition}{\bf Proposition}
\newtheorem{theorem}{\bf Theorem}
\renewcommand{\p@algorithm}{\arabic{algorithm}\expandafter\@gobble}
\newcommand{\PARAMETERS}{\item[\textbf{Parameters:}]}
\newcounter{step}[algorithm]
\newcommand\STEP[2][\(\triangleright\)]{%
	\refstepcounter{step}
	\vskip 0.25\baselineskip
	\item[]\hskip -\algorithmicindent #1 \textbf{Step \arabic{step}}%
	\ifthenelse{\equal{\unexpanded{#2}}{}}{}{ (\texttt{#2})}%
	\textbf{.}%
}
\newenvironment{algo}{\algo}{}
\def\algo#1\end{%
	\noindent\fbox{%
	\begin{minipage}[b]{\dimexpr\columnwidth-\algorithmicindent\relax}
	\begin{algorithmic}
	#1
	\end{algorithmic}
	\end{minipage}
	}%
\end}
\newcommand{\x}{\mathbf x}
\newcommand{\afunc}[1]{\operatorname{\mathsf{#1}}}
\begin{document}

\title[Boundary control of partial differential equations]{Boundary control of partial differential equations using frequency domain optimization techniques}
\author{Pierre Apkarian and Dominikus Noll}  
\thanks{Pierre Apkarian is with ONERA,  2 Av. Ed. Belin, 31055, Toulouse, France 
       {\tt\small Pierre.Apkarian@onera.fr}}        
\thanks{Dominikus Noll is with Universit\'e Paul Sabatier, Institut de Math\'ematiques, 118,
route de Narbonne, 31062 Toulouse, France         
{\tt\small dominikus.noll@math.univ-toulouse.fr}}%
\date{}

\maketitle

\begin{abstract} 
We present a frequency domain based $H_\infty$-control strategy to solve boundary control problems for systems governed by
parabolic or hyperbolic partial differential equation, where controllers are constrained to be physically implementable and of simple
structure suited for practical applications. The efficiency of our technique is demonstrated by controlling a reaction-diffusion equation with input delay,
and a wave equation with boundary anti-damping. 

\vspace{.2cm}\noindent
{\bf Keywords}:
Boundary control of PDEs, frequency-domain design, convection-diffusion, wave equation, $H_\infty$, structured feedback, infinite-dimensional systems
\end{abstract}

\section{Introduction}
A recurrent issue in system control is whether, or to what extent, frequency-domain based 
$H_\infty$-control strategies originally developed for real-rational systems expand to infinite-dimensional processes. Success in
rendering  $H_\infty$-optimization fit to provide
practically implementable controllers for infinite-dimensional systems
should substantially foster the acceptance of PDE-modeling as a tool  for control.

In response to this quest, we 
present a frequency domain based method to control infinite-dimensional LTI-systems, which is in particular suited for 
$H_\infty$-boundary control of parabolic and hyperbolic partial differential equations. Our method leads to practically implementable structured output feedback controllers
for PDEs in such a way that the typical work-flow in control design is respected.

After briefly outlining our method,
we will  apply it in more detail to two infinite-dimensional $H_\infty$-control
problems: boundary control of a reaction-diffusion system with input delay, as discussed in \cite{prieur_trelat:17},
and  boundary control of an anti-stable wave equation
to control noise and disturbance effects on duct combustion dynamics in a drilling pipe system \cite[p.6]{Fridman:2015}, \cite{bresch_krstic:14}.
While the first study leads to a parabolic equation of retarded type, the second study leads to a system of neutral type, which
poses new challenges to our frequency approach.

The structure of the paper is as follows. In section \ref{outline}
we give the principal steps of our method. Stability is discussed in section \ref{sect_stability}, the role of the Nyquist test in optimization
in section  \ref{G0}, its implementation in section \ref{nyquist}.          Sampling for performance is addressed in section \ref{barrier}.
In section \ref{parabolic}, we discuss the application of our method to a reaction-diffusion equation, and in section \ref{sect_wave}
to a wave equation.

\section{Outline of the method}
\label{outline}
We start out with an infinite-dimensional LTI-system represented by a
transfer function $G(s)$ with $p$ inputs and $m$ outputs, assumed well-posed in the sense of Salamon-Weiss \cite{Salamon:1987,CurtainZwart:1995,ChengMorris:2003,CurtainWeiss:1989}.
As principal application, we consider the case of
a linearized parabolic or hyperbolic boundary control problem in state-space form
\begin{align}
\label{boundary}
\Gamma : \left\{
\begin{matrix}
\dot{x} &\!\!\!= \!\!\!&Ax \\
Px &\!\!\!=\!\!\!& u\\
y &\!\!\!=\!\!\!& Cx
\end{matrix}
\right.
\end{align}
with operators $A\in L(X,H), P\in L(X,\mathbb R^p), C\in L(X,\mathbb R^m)$ on Hilbert spaces $H,X$ and finite-dimensional input and output spaces, where $X$ is densely embedded in $H$.
Then under natural assumptions specified in \cite[Sect. 2]{ChengMorris:2003} the transfer function $G(s)$ of (\ref{boundary}) is well-posed and obtained 
by applying the Laplace transform to (\ref{boundary}),  where every function evaluation $G(s)$ requires solving  an elliptic boundary control problem 
\begin{eqnarray}
\label{elliptic}
\Gamma_s:\left\{
\begin{matrix}
sx(s) &\!\!\!=\!\!\!& Ax(s) \\
Px(s) &\!\!\!=\!\!\!& u(s)\\
y(s) &\!\!\!=\!\!\!& Cx(s)
\end{matrix}
\right.
\end{eqnarray}
Well-posedness means that $G(s)$ is holomorphic on a half-plane Re$(s) > \sigma$, but it may be convenient
to require a little more, namely, that $G(s)$ extends meromorphically over a domain containing $\overline{\mathbb C}^+$. This is satisfied in
all cases of practical interest, and guaranteed theoretically
 e.g.  when $G$ is exponentially input/output stabilizable, see \cite[Lemma 8.2.9 (i)(b), (ii)]{Staffans_book}.
The meromorphic form of the transfer function is a necessary requirement 
for applicability of the Nyquist stability test.

After embedding $G(s)$ in a plant $P(s)$ with one or several  closed-loop performance and robustness channels $T_{wz}(P,K)$, we set up
the infinite-dimensional $H_\infty$-optimization problem
\begin{eqnarray}
\label{program}
\begin{array}{ll}
\mbox{minimize} &\|T_{wz}(P,K)\|_\infty \\
\mbox{subject to} & K \mbox{ stabilizes } G \mbox{ in closed loop} \\
& K \in \mathscr K
\end{array}
\end{eqnarray}
where $\mathscr K$ represents a  suitably chosen class of structured controllers with $m$ inputs and $p$ outputs.  In this work we understand the term {\em structured} 
in the sense that controllers $K(\x,s)$
depend differentiably on  a vector $\x\in \mathbb R^n$ of tunable parameters, and have well-posed transfer functions $K_{ij}(\x,s)$, typically with quasi-polynomial
numerators and denominators. Such control laws combine real-rational elements with input and output delays, and can therefore
be physically implemented. In the optimization procedure it will also be necessary to know the finite number
of rhp poles of $K(\x)$  for every $\x$.

Since (\ref{program}) as a rule cannot be solved exactly, we use an inexact 
bundle trust-region method as in \cite{noll:2013,ANR:2015,ANR:17}, 
which guarantees stability of the closed loop, and approximates $H_\infty$-performance up to
a user specified  precision. The following scheme presents our method
in a more formal way.

\begin{algorithm}[ht]
\caption{$H_\infty$-control of infinite-dimensional systems \label{algo1}}
\begin{algo}
\PARAMETERS $\vartheta >0$.
\STEP{Prepare}
Linearize system about steady-state and pre-compute open-loop  transfer function $G(s)$.
\STEP{Initialize} 
Choose controller structure and
find initial closed-loop stabilizing controller $K(\x^0)$ of that structure.
Let $G_0 = {\tt feedback}(G,K(\x^0))$.
\STEP{Plant}
Embed $G_0$ into plant  $P$ representing desired closed-loop performance specifications.
\STEP{Non-smooth optimization} 
Run inexact bundle trust-region method \cite{noll:2013} with starting point $\x^0$, discretizing (\ref{program}) at each iterate $\x^j$  so that
Nyquist  test guarantees stability of the loop, and $H_\infty$-performance
up to tolerance $\vartheta$.
\end{algo}
\end{algorithm}
 steps of this scheme require further explanations, which we provide in the following sections.

\section{Stability test}
\label{sect_stability}
Let us recall that with
the definitions
\[
F(s) = \begin{bmatrix} I & G(s) \\ -K(s) & I \end{bmatrix}, \qquad f(s) = \det F(s)
\]
the inverse $T(s) = F(s)^{-1}$ is given as
\begin{equation}
\label{T}
T = \begin{bmatrix}
\left( I+KG  \right)^{-1} & -K\left(  I+GK\right)^{-1}\\ \left(I+GK\right)^{-1} G & \left(I+GK\right)^{-1}
\end{bmatrix},
\end{equation}
and we call the closed-loop system $(G,K)$ stable in the $H_\infty$-sense, or simply stable,  if the transfer function
$T$ belong to the Hardy space ${\bf H}_\infty(\mathbb C^+, \mathbb C^{(m+p)\times (m+p)})$. As is well-known, $H_\infty$-stability
is equivalent to the absence of unstable poles in tandem with boundedness of $T(s)$ on $j\mathbb R$. 
We are interested in situations, where absence of unstable poles of $T(s)$ can be verified
by the Nyquist stability test.

Systems arising from parabolic equations are of retarded type and typically satisfy the spectrum decomposition assumption, 
which means that they have only a finite number  of unstable poles. The Nyquist stability test may therefore be applied directly,
to the effect that  in order to guarantee absence of unstable poles in the loop $f(j\omega) = \det (I + G(j\omega)K(\x^0,j\omega))$ has to wind $n_p$ times
around the origin in the clockwise sense,  where $n_{p}$
the number of rhp poles of $G$ and $K(\x)$ together.

In order to address the case where $f$ has a finite number of poles on $j\mathbb R$, 
we consider the following construction, which avoids the usual $\epsilon$-indentation of the Nyquist contour into the rhp.
We choose a holomorphic function $h$ on a domain containing $\overline{\mathbb C}^+$ such that $h(s) \not=0$ on $\mathbb C^+$, $\lim_{s\to \infty} h(s) = 1$ on $\overline{\mathbb C}^+$,
and such that $h$ has a zero of order $p$ at $\pm j\omega$ precisely when $F$ has a pole of order $p$ at $\pm j\omega$. 
Let $\widetilde{f} = fh$,  $\afunc{D}$
a Nyquist D-contour into the rhp with  $[-j\overline{\omega},j\overline{\omega}] \subset \afunc{D}$   containing in its interior all rhp poles of $F$. Then the modified Nyquist curve
$\widetilde{f} \circ  \afunc{D}$ has the same winding number as the original Nyquist curve  $f \circ  \afunc{D}_\epsilon$ with sufficiently small $\epsilon$-indentations.

From the moment onward a controller $K(\x)$ has been identified closed-loop stabilizing using the Nyquist test,
the nonsmooth optimization method, when considering a trial step $K(\x + d\x)$ away from the current iterate $\x$, will re-compute
the winding number to check stability of the the loop with $K(\x+d\x)$. In those cases where
the number of poles of $K(\x)$ is independent of $\x$, this means
we simply have to assure that the winding number $n_p$ does not change as we go from $\x$ to $\x+d\x$, which requires preventing
the
Nyquist curve from crossing the origin. Stability in the $H_\infty$-sense for the new $K(\x+d\x)$  will then follow under the proviso
that the closed-loop transfer function $T(\x+d\x)$ remains bounded on $j\mathbb R$, which is the case
when $G,K(\x)$ are bounded on $j\mathbb R$, and occurs in particular if these transfer functions are proper. With
these preparations the situation for parabolic systems is covered by the following:

\begin{theorem}
\label{theorem1}
{\em
Suppose process $G(s)$ and controller $K(s)$ are well-posed, extend meromorphically into a domain containing $\overline{\mathbb C}^+$, and satisfy the following conditions:
\begin{enumerate}
\item[i.] $F$ has no zeros on $j\mathbb R$ and  only a finite number $N_p$ of poles in $\overline{\mathbb C}^+$, $n_p$ of which are in $\mathbb C^+$.
\item[ii.] There are no pole/zero cancellations on $j\mathbb R$.
\item[iii.] There exist a frequency $\overline{\omega} > 0$ and $\alpha > 0$ such that {\rm Re}$f(j\omega) > \alpha$ for all $\omega \in [\overline{\omega},\infty)$. 
\item[iv.] $G,K$ are  bounded on $j\mathbb R\setminus [-j\overline{\omega},j\overline{\omega}]$. 
\item[v.] $G,K$ have strongly (exponentially) stabilizable and detectable state-space realizations.
\end{enumerate}
Suppose the modified Nyquist curve
$\widetilde{f} \circ  \afunc{D}$ winds $n_p$ times around the origin in the clockwise sense. Then the closed-loop $T(s)$ is strongly (exponentially) stable.
}\hfill $\square$
\end{theorem}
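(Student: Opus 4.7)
The overall strategy is to use the argument principle on the modified function $\widetilde{f} = fh$, convert the winding hypothesis into a zero count for $f$, extend this to all of $\overline{\mathbb C}^+$ using (iii) and (iv), and then lift the resulting $H_\infty$-stability of $T$ to strong (resp.\ exponential) semigroup stability via assumption (v). First I would verify that $\widetilde{f}$ is meromorphic on a neighborhood of $\overline{\mathbb C}^+$, extends holomorphically across the portion of $j\mathbb R$ traversed by $\afunc{D}$, and has precisely $n_p$ poles (counted with multiplicity) in the interior of $\afunc{D}$. By construction of $h$, its $j\mathbb R$-zeros cancel exactly the $j\mathbb R$-poles of $F$, while $h$ is zero-free on $\mathbb C^+$; hence the rhp poles of $\widetilde{f}$ inside $\afunc{D}$ coincide with those of $f$, and are $n_p$ in number by (i) and the choice of $\overline{\omega}$. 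The argument principle then says the clockwise winding number of $\widetilde{f}\circ\afunc{D}$ around $0$ equals $n_p - Z$, where $Z$ counts zeros of $\widetilde{f}$ in the interior of $\afunc{D}$. The hypothesis that this winding number equals $n_p$ forces $Z = 0$, and since $h$ has no zeros in $\mathbb C^+$, $f$ itself has no zeros inside $\afunc{D}$.

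Next I would extend zero-freeness of $f$ to the remainder of $\overline{\mathbb C}^+$. On $j\mathbb R\setminus[-j\overline{\omega},j\overline{\omega}]$, assumption (iii) yields $\mathrm{Re}\,f(j\omega)\geq\alpha>0$, so $f$ does not vanish there, while (i) rules out zeros on $[-j\overline{\omega},j\overline{\omega}]$. In the unbounded strip $\{\mathrm{Re}(s)>0,\,|s|>\overline{\omega}\}$ I would combine meromorphy of $F$ with the boundedness supplied by (iv) to control $f$ at infinity, and then rule out zeros there by a Rouché-type or maximum-modulus argument on a family of semicircles of radius $R\to\infty$. Together these imply $f\neq 0$ on all of $\overline{\mathbb C}^+$.

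The remaining passage to $H_\infty$-stability is essentially formal: via the representation $T=F^{-1}=\mathrm{adj}(F)/f$, combined with the no-cancellation assumption (ii), zero-freeness of $f$ on $\overline{\mathbb C}^+$ translates into the absence of rhp poles of $T$, while (iii) and (iv) give the uniform bound on $j\mathbb R$ required for membership in $\mathbf{H}_\infty$. To lift $H_\infty$-stability of $T$ to the claimed strong (resp.\ exponential) stability of the closed-loop semigroup, I would invoke the classical result (see \cite{CurtainWeiss:1989} and \cite[Ch.~8]{Staffans_book}) asserting that for well-posed LTI systems equipped with strongly (resp.\ exponentially) stabilizable and detectable state-space realizations --- precisely what (v) supplies --- $H_\infty$-stability of the transfer function entails the corresponding stability of the closed-loop semigroup.

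The hardest step is likely the extension of zero-freeness from the bounded interior of $\afunc{D}$ to the unbounded strip $\{|s|>\overline{\omega}\}\cap\mathbb C^+$: in infinite dimensions $GK$ need not decay at infinity either on $j\mathbb R$ or in the rhp, so one must exploit the interplay of (iii), (iv), and meromorphy with more care than in the finite-dimensional setting. A secondary technical difficulty is the final lifting step, which requires selecting the precise version of the infinite-dimensional stability theorem matching the strong-versus-exponential distinction built into (v).
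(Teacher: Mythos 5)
Your plan follows essentially the same route as the paper: the argument principle applied to $\widetilde f = fh$ over the finite D-contour to rule out rhp zeros of $f$, conditions (iii)--(iv) to control the imaginary axis and the boundedness of $T$ there, and the lifting from $H_\infty$-stability to strong (exponential) stability of the realization via stabilizability and detectability exactly as in Theorem \ref{theorem2} (Morris, Staffans) --- the paper itself only sketches the Nyquist part by citing \cite{AN:18}. You have also correctly isolated the one genuinely delicate point, namely excluding zeros of $f$ in the unbounded portion of $\mathbb C^+$ outside the D-contour, where (iii) and (iv) only constrain $f$ on $j\mathbb R$; this is precisely the point the paper dispatches by asserting that the proof of \cite{AN:18} ``can be adapted with minor changes.''
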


The theorem was proved in \cite{AN:18} under the more standard assumptions that $G,K$ are proper and $\lim_{s\to \infty} f(s) \not=0$ 
on $\overline{\mathbb C}^+$ exists, but since 
our present statement concerns only a finite Nyquist contour, $\widetilde{f}\circ \afunc{D}$, the proof can be adapted with minor changes. 

The full force of Theorem \ref{theorem1} is needed when it comes to dealing with hyperbolic systems.
Here the
situation is complicated because in open loop neutral systems may have infinitely many  rhp poles in a strip $0 \leq {\rm Re}(s) < \alpha$, so 
condition (iii) may fail,
even though  $G$ may still be well-posed. When this is the case, it is
impossible to use the Nyquist test directly even if
an initial stabilizing controller $K_0=K(\x^0)$ is given, because 
the Nyquist curve $f(s)=\det(I+G(s)K_0(s))$ winds infinitely
often around the origin. In this event the method explained in the following section is helpful. 

\subsection{Enabling the Nyquist test}
\label{G0}
Assuming that is has been verified by some other means that the closed-loop system
$G_0= G(I+K(x_0) G)^{-1}=: {\tt feedback}(G, K(\x^0))$ is indeed stable. Then a small gain argument
tells us that ${\tt feedback}(G_0,K)$ remains stable for stable controllers $K$ satisfying $\|K\|_\infty < 1/\|G_0\|_\infty$. Since
${\tt feedback}(G_0,K) = $ $
{\tt feedback}({\tt feedback}(G,K(\x^0)),K) $ $ = {\tt feedback}(G,K(\x^0)+K)$, we see by letting $K$  the difference $K(\x)-K(\x^0)$ that
${\tt feedback}(G,K(\x))$ is stable for $\|K(\x)-K(\x^0)\|_\infty < 1/\| G_0\|$, and this can now be verified by applying the Nyquist test
to  $f_0(s) = \det (I+ G_0(s)K(s))$, where $K = K(\x)-K(\x^0)$. 

Here Theorem \ref{theorem1} applies indeed to $G_0$, $K$, because $G_0$ has no poles on $\overline{\mathbb C}^+$, so that i.-iv. are satisfied for $G_0,K$
provided  iii. was from start satisfied for $G,K_0$. We have, however, to recall that despite stability of the loop,  $f_0$ will typically not have a limit 
as $s \to \infty$ on $\overline{\mathbb C}^+$, so we will rely on condition iv., which assures that outside the band $[-\overline{\omega},\overline{\omega}]$
the Nyquist curve is in no danger to turning around 0.

From here on, we can proceed just as in the previous case for retarded systems,
where now $N_p=n_p$ is the known number of rhp poles of $K=K(\x)-K(\x^0)$. 

\begin{remark}
Should our initial stabilizing controller $K_0$ be unstable, it may be preferable to
use controllers of the form $K_0 + K(\x)$ for $K(\x)$ stable, which gives rise to a modified structure. 
\end{remark}

The question remains in what sense a state-space representation of $T$ given by
(\ref{T}) will be stable. This is decided by the 
following

\begin{theorem}
\label{theorem2}
Suppose $G,K$ are well-posed transfer functions which admit strongly (exponentially) stabilizable
state-space realizations. Suppose the closed loop transfer function satisfies
$T \in {\bf H}_\infty$. Then the generator of the state-space representation of the closed loop
is strongly (exponentially) stable. 
\end{theorem}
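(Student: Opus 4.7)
The plan is to reduce Theorem~\ref{theorem2} to the classical equivalence between external ($H_\infty$) and internal (semigroup) stability for well-posed linear systems which are \emph{both} stabilizable and detectable, due to Rebarber--Weiss and Staffans, see \cite{Staffans_book}. That result says: for a well-posed realization which is strongly (resp.\ exponentially) stabilizable and detectable, the transfer function belongs to ${\bf H}_\infty$ if and only if the associated semigroup is strongly (resp.\ exponentially) stable. Since $T\in{\bf H}_\infty$ is already the external stability hypothesis, all the real work lies in producing a realization of the closed loop to which this equivalence applies. Detectability of $G$ and $K$, implicit in the hypothesis of Theorem~\ref{theorem2} exactly as in condition (v) of Theorem~\ref{theorem1}, will be used in the same manner as stabilizability.

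The first step is to construct a closed-loop state-space realization on the product Hilbert space $X_G\times X_K$. In the Salamon--Weiss framework, the well-posedness of $G$ and $K$ combined with the algebraic invertibility of $F(s)$ on a right half-plane (which follows from $T\in{\bf H}_\infty$ and the form \eqref{T}) ensures that the standard feedback interconnection of $G$ and $K$ is itself a well-posed linear system, whose generator I will denote by $A_{cl}$ and whose transfer matrix equals the $T$ displayed in \eqref{T}.

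The second and crucial step, which I expect to be the main obstacle, is to show that this composite realization $(A_{cl},B_{cl},C_{cl},D_{cl})$ inherits strong (resp.\ exp.) stabilizability and detectability from the individual realizations. Given admissible state-feedbacks $F_G$, $F_K$ rendering $A_G+B_GF_G$ and $A_K+B_KF_K$ strongly (resp.\ exp.) stable, the block feedback $F_{cl}=\mathrm{diag}(F_G,F_K)$ acting on the external closed-loop input channels can be analysed by reorganising the interconnection as the feedback of the already stable plants with transfer functions $G(I+F_G G)^{-1}$ and $K(I+F_K K)^{-1}$; well-posed admissibility and small-gain arguments then yield strong (resp.\ exp.) stability of the perturbed generator $A_{cl}+B_{cl}F_{cl}$. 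Detectability is established dually using output-injection operators associated with $(A_G,C_G)$ and $(A_K,C_K)$. Once both properties are in hand, the Rebarber--Weiss--Staffans equivalence applied to the closed-loop realization with $T\in{\bf H}_\infty$ delivers the required strong (resp.\ exp.) stability of $A_{cl}$. The genuine technical difficulty is concentrated in the feedback bookkeeping of step two: in the infinite-dimensional well-posed setting $F_G$ and $F_K$ are typically unbounded on the state space, so the Weiss admissibility conditions must be verified along every feedback transformation in order to keep the rearranged interconnection well-posed.
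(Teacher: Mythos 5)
Your argument follows the same route as the paper's proof: realize the feedback interconnection of $G$ and $K$ as a well-posed system with transfer function $T$, transfer stabilizability and detectability from the components to the closed loop, and then invoke the equivalence of external ($H_\infty$) and internal stability for stabilizable and detectable realizations. The only difference is that the inheritance step you single out as the main technical obstacle is exactly Staffans \cite[Lemma 8.2.7]{Staffans_book}, which the paper cites directly (together with Morris \cite[Theorem 5.2]{Morris:1999} for the exponential case and \cite[Theorem 8.2.11 (ii)]{Staffans_book} for the strong case), so no hand-built block feedback or small-gain bookkeeping is required.
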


\begin{proof}
For exponential stability,
according to Morris \cite[Theorem 5.2]{Morris:1999} it suffices to show that the closed
loop is exponentially stabilizable and exponentially detectable. 
By Staffans \cite[Lemma 8.2.7]{Staffans_book} this follows as soon as each of the components
$G,K$ is individually exponentially stabilizable and detectable, but the latter is true by hypothesis.

For the statement concerning strong stability we use \cite[Lemma 8.2.7]{Staffans_book} again, which now guarantees
that the closed loop $T(G,K)$ is strongly stabilizable and detectable. Since it is $H_\infty$-stable, we can invoke
\cite[Theorem 8.2.11 (ii)]{Staffans_book} to infer that the closed loop is also strongly stable.
\end{proof}

\begin{figure}[h]
\includegraphics[scale=1.1]{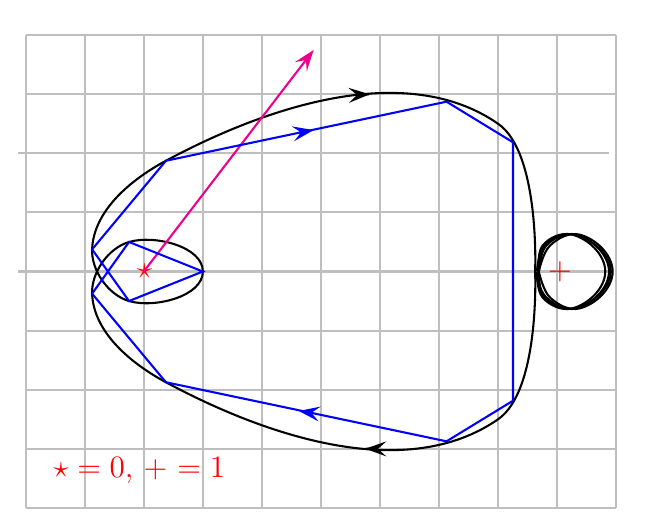}

\caption{Winding number of Nyquist curve (black) and polygon (blue)  agree if change of argument of segment
$[\widetilde{f}(j\omega_\nu),\widetilde{f}(j\omega_{\nu+1})]$ equals change of argument of corresponding piece of Nyquist curve $\widetilde{f}([\omega_\nu,\omega_{\nu+1}])$, as guaranteed by condition (\ref{L}). Winding number of polygon
$P_{\widetilde{f}}$  is computed by counting ray crossings in red. \label{fig_nyquist}}
\end{figure}


\subsection{Sampling for the Nyquist test}
\label{nyquist}
During optimization,
the Nyquist test is applied at every candidate controller $K(\x)$ to check stability of the loop ${\tt feedback}(G,K(\x))$, respectively for neutral systems,  of
the preliminary stabilized system
${\tt feedback}(G_0,K(\x)-K(\x^0))$.
For that we have to sample the modified Nyquist curve $\widetilde{f}=fh$     
at frequencies $0 =\omega_0  < \dots < \omega_N = \overline{\omega}$ such that the winding
number of $\widetilde{f} \circ \afunc{D}$, based on a typical D-contour extending into the rhp, coincides with the winding number of the closed polygon $P_{\widetilde{f}}$
obtained as
$\widetilde{f}(j\omega_0),\dots, \widetilde{f}(j\omega_N)$, $\widetilde{f}(-j\omega_N)$, $\widetilde{f}(-j\omega_{N-1}),\dots,\widetilde{f}(-j\omega_0)$. 
The latter can then be computed conveniently using the ray crossing algorithm (see Fig. \ref{fig_nyquist})

To begin with, we need a cutoff frequency $\overline{\omega} > 0$ such that
every Nyquist contour
$\afunc{D}$ containing the segment $[-j\overline{\omega},j\overline{\omega}]$ contains all $n_p$ unstable poles of $G,K$ in its interior.
We then sample on the segment $[0,\overline{\omega}]$, and replace the 
curved part of the D-contour   $\widetilde{f}\, \circ $\,\rotatebox[origin=c]{180}{$\afunc{C}$}   by
the segment $[\widetilde{f}(-j \overline{\omega}),\widetilde{f}(j \overline{\omega})]$. For the latter to be authorized, 
the closed curve $\gamma$ concatenated by this segment and  $\widetilde{f}\, \circ $\,\rotatebox[origin=c]{180}{$\afunc{C}$}  has to satisfy
${\rm ind}(\gamma,0)=0$ and should not contain any of the $n_p$ poles
of $F$ in its interior. This is for instance the case if we choose $\overline{\omega}$ as
in condition iv. of Theorem \ref{theorem1}.
Then we sample $\widetilde{f}$ on $[0,\overline{\omega}]$
such that none of the curves $\gamma_i$ obtained by concatenating the segment $[\widetilde{f}(j\omega_{i+1}),\widetilde{f}(j\omega_{i})]$ with the piece $\widetilde{f}([j\omega_i,j\omega_{i+1}])$
of the modified Nyquist curve encircles $0$. This can be assured my the method in \cite{AN:18}. 
Call a mapping $L[\cdot,\cdot]:\mathbb R^2\to \mathbb R$ a first-order bound of $\widetilde{f}$
if $|\widetilde{f}'(j\omega)| \leq L[\omega^-,\omega^+]$ for all $\omega\in [\omega^-,\omega^+]$. Then we have the following

\begin{lemma}{\em 
Suppose that for fixed $K$ the cutoff frequency $\overline{\omega}>0$ is as above, and the sampling nodes satisfy 
the condition
\begin{equation}
\label{L}
L[\omega_i,\omega_{i+1}] (\omega_{i+1}-\omega_i) < |\widetilde{f}(j\omega_i)| + |\widetilde{f}(j\omega_{i+1})|.
\end{equation}
Then the winding numbers of the modified Nyquist curve  $\widetilde{f} \circ \afunc{D}$    and the approximating closed polygon $P_{\widetilde{f}}$
agree.
}
\end{lemma}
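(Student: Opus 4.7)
The plan is to reduce the statement to a local claim on each sub-interval $[\omega_i,\omega_{i+1}]$ and then settle it by an ellipse-separation argument. Winding numbers add along concatenations, so the difference ${\rm ind}(\widetilde{f}\circ\afunc{D},0)-{\rm ind}(P_{\widetilde{f}},0)$ equals the sum over $i$ of the winding numbers of the closed loops $\gamma_i$ formed by following the arc $\widetilde{f}([j\omega_i,j\omega_{i+1}])$ and returning along the reversed polygon side $[\widetilde{f}(j\omega_{i+1}),\widetilde{f}(j\omega_i)]$. The closing segment $[\widetilde{f}(j\overline{\omega}),\widetilde{f}(-j\overline{\omega})]$ of the polygon is already authorized to replace the curved rhp part of $\afunc{D}$ by the hypothesis on $\overline{\omega}$, and the negative-frequency pieces are covered by the conjugate symmetry $\widetilde{f}(\overline{s})=\overline{\widetilde{f}(s)}$. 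It therefore suffices to show ${\rm ind}(\gamma_i,0)=0$ for every $i$.

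For the local step, the fundamental theorem of calculus on the imaginary axis gives, for any $\omega\in[\omega_i,\omega_{i+1}]$,
\[
|\widetilde{f}(j\omega)-\widetilde{f}(j\omega_i)|\le\int_{\omega_i}^{\omega}|\widetilde{f}'(j\tau)|\,d\tau\le L[\omega_i,\omega_{i+1}](\omega-\omega_i),
\]
and symmetrically $|\widetilde{f}(j\omega)-\widetilde{f}(j\omega_{i+1})|\le L[\omega_i,\omega_{i+1}](\omega_{i+1}-\omega)$. Adding the two and invoking condition (\ref{L}) yields, for every point $z$ on the arc,
\[
|z-\widetilde{f}(j\omega_i)|+|z-\widetilde{f}(j\omega_{i+1})|\le L[\omega_i,\omega_{i+1}](\omega_{i+1}-\omega_i)<|\widetilde{f}(j\omega_i)|+|\widetilde{f}(j\omega_{i+1})|.
\]
The same inequality holds trivially for any point on the segment $[\widetilde{f}(j\omega_i),\widetilde{f}(j\omega_{i+1})]$, because there the left-hand side equals $|\widetilde{f}(j\omega_{i+1})-\widetilde{f}(j\omega_i)|$, which is in turn bounded by the arc length.

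Geometrically this places the whole loop $\gamma_i$ inside the closed ellipse $E_i$ with foci $\widetilde{f}(j\omega_i),\widetilde{f}(j\omega_{i+1})$ and sum-of-distances-to-foci equal to $L[\omega_i,\omega_{i+1}](\omega_{i+1}-\omega_i)$, while the origin has the strictly larger sum-of-distances $|\widetilde{f}(j\omega_i)|+|\widetilde{f}(j\omega_{i+1})|$, so $0$ lies outside $E_i$. Since $E_i$ is convex, it can be separated from $0$ by a line, which yields a path from $0$ to $\infty$ disjoint from $E_i$ and hence from $\gamma_i$; thus $0$ lies in the unbounded component of $\mathbb{C}\setminus\gamma_i$ and ${\rm ind}(\gamma_i,0)=0$, as required. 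The step I expect to require the most care is the arc-length bound itself, which tacitly relies on the meromorphic extension of $\widetilde{f}=fh$ and the absence of poles of $\widetilde{f}$ on $[j\omega_i,j\omega_{i+1}]$ (precisely what the construction of $h$ was designed to ensure) so that $\widetilde{f}'(j\tau)$ is well-defined and the integral estimate is legitimate.
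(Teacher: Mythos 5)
Your proof is correct and follows essentially the route the paper itself sets up (the paper defers the details to \cite{AN:18}, but its construction of the local loops $\gamma_i$ and the caption of Fig.~\ref{fig_nyquist} describe exactly your decomposition, with condition (\ref{L}) serving as the ellipse-with-foci $\widetilde{f}(j\omega_i),\widetilde{f}(j\omega_{i+1})$ separation from the origin). The only point worth making explicit is that additivity of the argument change also requires each arc and each chord to avoid $0$, which your ellipse containment already delivers.
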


For the proof see \cite[sect 3.]{AN:18}.   \hfill $\square$

\section{Sampling for performance}
\label{barrier}
Sampling for $H_\infty$-performance was also analyzed in \cite{AN:18} and can again be based on a first-order bound $L[\cdot,\cdot]$, now for
the function $\phi(\omega) = \overline{\sigma} \left( T_{wz}(P(j\omega),K(j\omega))  \right)$. We recall the following
\begin{lemma}
\label{lem1}
{\em
Let $\gamma^* = \max\{\phi(\omega_i): i=1,\dots,N\}$ for a given controller $K$ and a corresponding sampling $\omega_i$. 
Let $\vartheta > 0$ be a user-specified tolerance. If the nodes $\omega_i$ satisfy
\begin{equation}
\label{LL}
L[\omega_i,\omega_{i+1}] (\omega_{i+1}-\omega_i) < 2 \gamma^* + 2 \vartheta - \phi(\omega_i)-\phi(\omega_{i+1}),
\end{equation}
 then the true
$H_\infty$ norm is within tolerance $\vartheta$ of its estimated value, that is,
$\gamma^* \leq \|T_{wz}(P,K)\|_\infty \leq \gamma^* + \vartheta$.
\hfill $\square$
}
\end{lemma}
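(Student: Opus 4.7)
The plan is to split the claim into its two halves and treat them independently. The left inequality $\gamma^* \leq \|T_{wz}(P,K)\|_\infty$ is immediate: by definition $\|T_{wz}(P,K)\|_\infty = \sup_{\omega \in \mathbb{R}} \phi(\omega)$, while $\gamma^*$ is the maximum of $\phi$ over the finite node set $\{\omega_i\}$, and so every $\phi(\omega_i)$ is dominated by the supremum.

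For the upper bound, I would argue pointwise on each subinterval $[\omega_i,\omega_{i+1}]$. Fix such an interval and an arbitrary $\omega\in[\omega_i,\omega_{i+1}]$. Using the first-order bound $L[\omega_i,\omega_{i+1}]$ on the derivative of $\phi$ (interpreted in the Lipschitz sense, since $\phi$ is a maximum-singular-value function and hence only locally Lipschitz in general), a mean-value-type estimate on either endpoint yields
\[
\phi(\omega) \leq \phi(\omega_i) + L[\omega_i,\omega_{i+1}](\omega-\omega_i),\qquad
\phi(\omega) \leq \phi(\omega_{i+1}) + L[\omega_i,\omega_{i+1}](\omega_{i+1}-\omega).
\]
Averaging these two inequalities gives
\[
\phi(\omega) \leq \tfrac{1}{2}\bigl(\phi(\omega_i)+\phi(\omega_{i+1})\bigr) + \tfrac{1}{2} L[\omega_i,\omega_{i+1}](\omega_{i+1}-\omega_i),
\]
and substituting the sampling hypothesis (\ref{LL}) collapses the right-hand side to a quantity strictly below $\gamma^* + \vartheta$. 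Taking the supremum over $\omega$ and over the index $i$ then gives $\sup_{\omega\in[\omega_0,\omega_N]} \phi(\omega) \leq \gamma^* + \vartheta$.

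The remaining step is to ensure that the high-frequency tail $\omega > \omega_N$ contributes nothing new. This is where one must invoke the roll-off of $T_{wz}(P,K)$ that comes with well-posedness: the cutoff $\omega_N$ should be chosen so that $\phi(\omega) \leq \gamma^*$ for all $\omega\geq \omega_N$, exactly as in the Nyquist-sampling discussion of Section \ref{nyquist}. With this choice, combining the interval-wise bound with the tail bound delivers $\|T_{wz}(P,K)\|_\infty \leq \gamma^* + \vartheta$, completing the proof.

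The main obstacle is the non-smoothness of $\phi$: it is the pointwise maximum singular value of a complex matrix-valued transfer function, so classical mean-value-theorem reasoning must be replaced by a Lipschitz estimate, and the meaning of $L[\cdot,\cdot]$ as a first-order bound must be interpreted accordingly. The clean bookkeeping of the tail outside $[\omega_0,\omega_N]$ is the second delicate point; everything else is a direct algebraic manipulation of the hypothesis (\ref{LL}).
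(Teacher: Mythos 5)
Your proof is correct and is essentially the argument behind the cited reference \cite[sect.~5]{AN:18}: the paper itself only points to that reference, and the averaging of the two one-sided Lipschitz estimates at $\omega_i$ and $\omega_{i+1}$ is precisely the mechanism that the factor $2$ and the term $-\phi(\omega_i)-\phi(\omega_{i+1})$ in (\ref{LL}) are designed for. You are also right to flag the two points the lemma leaves implicit, namely that $L[\cdot,\cdot]$ must be read as a bound on a Clarke/Lipschitz slope of the nonsmooth function $\phi$, and that the cutoff $\omega_N=\overline{\omega}$ must be chosen so that the tail $\omega>\overline{\omega}$ satisfies $\phi(\omega)\leq\gamma^*+\vartheta$ (e.g.\ via roll-off), since otherwise the upper bound only holds on $[0,\overline{\omega}]$.
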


For the proof see \cite[sect. 5]{AN:18}.
In that work we have compared 
sampling for stability via (\ref{L}) and sampling for $H_\infty$-performance based on (\ref{LL}) on a large test bench including
finite and infinite dimensional systems. The results fairly consistently show that performance requires at least 10 times more nodes $\omega_i$
than sampling to assure that the Nyquist test is correct. This leads to the following significative
meta-theorem: {\em $H_\infty$-performance is 10 times more costly than mere stability}. 

In those cases where a channel $T=T_{wz}(P,K)$ for $H_2$-optimization is available, we need a sampling
$\omega_i$ for $f(\omega) = {\rm trace}\left(T(j\omega)^H T(j\omega) \right)$. Assume that a first-order bound $L[\omega^-,\omega^+]$ for $f$
is available, and let $P$ be the piecewise linear function corresponding to the polygon with nodes $(\omega_i,f(\omega_i))$,
$0 =\omega_0 < \dots < \omega_N=\overline{\omega}$ with $P(\omega)=0$ for $\omega > \overline{\omega}$. 

\begin{lemma}
{\em 
Let $\vartheta > 0$ be a user specified  tolerance and suppose the cutoff frequency $\overline{\omega} >0$ is such that
\[
e_1=\int_{\overline{\omega}} ^\infty f(\omega) d\omega  \leq \vartheta/2.
\]
Suppose the interval $[0,\overline{\omega}]$ is sampled with nodes $\omega_i$ such that
\begin{equation}
\label{LLL}
\textstyle\frac{\overline{\omega}}{4} (\omega_{i+1}-\omega_i) L[\omega_i,\omega_{i+1} ] \leq \vartheta/2,
\end{equation}
then the error satisfies
\[
e=\left| \int_0^\infty f(\omega)d\omega - \int_0^\infty P(\omega) d\omega \right| < \vartheta.
\]
}
\end{lemma}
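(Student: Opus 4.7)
The plan is to split the total error into an interior quadrature error on $[0,\overline{\omega}]$ and a tail on $[\overline{\omega},\infty)$, bound each by $\vartheta/2$, and add. Since $P$ vanishes outside $[0,\overline{\omega}]$, the triangle inequality gives
\[
e \;\leq\; \left|\int_0^{\overline{\omega}}(f-P)\,d\omega\right| + \int_{\overline{\omega}}^\infty f(\omega)\,d\omega \;\leq\; \left|\int_0^{\overline{\omega}}(f-P)\,d\omega\right| + \vartheta/2
\]
by the choice of cutoff frequency $\overline{\omega}$. So it remains to show that (\ref{LLL}) forces the interior quadrature error to be at most $\vartheta/2$.

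For this I would decompose additively over subintervals, $\int_0^{\overline{\omega}}(f-P)\,d\omega = \sum_{i=0}^{N-1} \int_{\omega_i}^{\omega_{i+1}}(f-P)\,d\omega$, and treat one subinterval at a time. Writing $L_i := L[\omega_i,\omega_{i+1}]$ and $h_i := \omega_{i+1}-\omega_i$, a direct pointwise estimate of $|f-P|$ using only the first-order bound gives at best a constant of $L_i h_i^2/3$ per interval, which turns out to be insufficient once summed against the sampling hypothesis. The decisive step is to exploit cancellation in the \emph{signed} error by integration by parts: with $m_i = (\omega_i+\omega_{i+1})/2$, the choice of antiderivative $\omega\mapsto\omega-m_i$ of $1$ yields
\[
\int_{\omega_i}^{\omega_{i+1}} f(\omega)\,d\omega \;=\; \frac{h_i}{2}\bigl(f(\omega_i)+f(\omega_{i+1})\bigr) - \int_{\omega_i}^{\omega_{i+1}}(\omega-m_i)\,f'(\omega)\,d\omega,
\]
and the first term on the right is exactly $\int_{\omega_i}^{\omega_{i+1}} P\,d\omega$. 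Hence
\[
\left|\int_{\omega_i}^{\omega_{i+1}}(f-P)\,d\omega\right|\;\leq\; L_i\int_{\omega_i}^{\omega_{i+1}}|\omega-m_i|\,d\omega \;=\; \frac{L_i\,h_i^2}{4}.
\]

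To finish, I would sum these subinterval bounds and insert the hypothesis (\ref{LLL}) in the equivalent form $L_i h_i \leq 2\vartheta/\overline{\omega}$, together with the telescoping identity $\sum_i h_i = \overline{\omega}$, to obtain
\[
\left|\int_0^{\overline{\omega}}(f-P)\,d\omega\right| \;\leq\; \sum_{i=0}^{N-1} \frac{L_i h_i^2}{4} \;\leq\; \frac{1}{4}\cdot\frac{2\vartheta}{\overline{\omega}}\cdot\overline{\omega} \;=\; \frac{\vartheta}{2},
\]
which combines with the tail bound to give $e \leq \vartheta$. The main obstacle is getting the constant right: only the integration-by-parts step, which recasts the quadrature error as the integral of $f'$ against the zero-mean weight $\omega-m_i$, produces the factor $1/4$ needed to match the $\overline{\omega}/4$ appearing in (\ref{LLL}); any cruder pointwise approach would fall short by a multiplicative factor and invalidate the conclusion.
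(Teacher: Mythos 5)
Your proof is correct and follows essentially the same route as the paper: split the error into the tail beyond $\overline{\omega}$ and the quadrature error on $[0,\overline{\omega}]$, bound the latter interval-by-interval by $\tfrac14 L[\omega_i,\omega_{i+1}](\omega_{i+1}-\omega_i)^2$, and sum using (\ref{LLL}) and $\sum_i(\omega_{i+1}-\omega_i)=\overline{\omega}$. The only difference is that you actually derive the per-interval trapezoid bound (via integration by parts against the zero-mean weight $\omega-m_i$), which the paper simply asserts; the residual $\leq$ versus $<$ mismatch in the final inequality is inherited from the lemma's non-strict hypotheses and is present in the paper's own proof as well.
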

\begin{proof}
Let $e_1$ be the error of the high frequency contribution satisfying $e_1 < \vartheta/2$. Now the error
of the low frequency part is
$e_1 = \left| \int_0^{\overline{\omega}} f(\omega) d\omega - \int_0^{\overline{\omega}} P(\omega) d\omega \right| 
\leq \sum_{i=0}^{N-1} \frac{1}{4} (\omega_{i+1}-\omega_i)^2 L[\omega_i,\omega_{i+1}]
\leq \sum_{i=0}^{N-1} (\omega_{i+1}-\omega_i) \vartheta	 /2\overline{\omega} = \vartheta/2.
$ Hence altogether $e = e_1+e_2 < \vartheta$.
\end{proof}

\begin{remark}
It is clear that (\ref{LLL}) is much more binding than (\ref{LL}), because sampling to assure
the exactness of the maximum value within a tolerance $\vartheta > 0$ has only to be precise at frequencies close to the maximum, 
whereas approximating an
integral requires good approximation on the whole $[0,\overline{\omega}]$. This suggests avoiding
$H_2$-optimization if possible. Since robustness requirements further press to avoid $H_2$-optimization, we presently 
seek for workarounds.
\end{remark}

\section{Optimization}
With a computable test for stability   and a method to approximate the objective function $\|T_{wz}\|_\infty$ available,
we run our nonsmooth optimization method based on \cite{AN:18,ANR:2015},
with the interpretation of  inexact function and subgradient  evaluations as in \cite{noll:2013}.
For this we have to recall subgradient evaluation as discussed in \cite{AN06a}. Suppose $\omega_i$ is one of the sample  frequencies
where the maximum $\gamma^* = \phi(\omega_i)$ of the approximation is attained with error $\gamma^* \leq \|T_{wz}(\x)\|_\infty \leq \gamma^* + \vartheta$. Then
an approximate subgradient is generated by computing one or several maximum eigenvectors $\Phi_i$ of $T_{wz}(\x,\omega_i)^HT_{wz}(\x,\omega_i)$ 
and using formulas (12) or (13) of \cite[Sect. IV]{AN06a}. Inspecting those shows that for the nearby frequency
$\omega$ where $\|T_{wz}(\x)\|_\infty$ is in reality attained, the mismatch between the estimated eigenvector $\Phi_i$ and one of the true maximum eigenvectors 
$\Phi$  of the $H_\infty$-norm at
$\omega$ is proportional to the error in the function values, but with proportionality constant 
depending on the reciprocal of the eigenvalue gap at $T_{wz}(\x,\omega)$. Since only finitely many frequencies are active, the eigenvalue gap cannot
become arbitrarily small.  From the same reason it remains bounded in the neighborhood of any of the accumulation points of the sequence of serious iterates generated by
the bundle or the bundle trust-region method. That suggests indeed an interpretation of our method as an instance of the inexact bundle trust-region method.

For a recent thorough convergence analysis of the bundle method with inexact function and subgradient evaluations 
in an infinite-dimensional setting we refer to Hertlein and Ulbrich \cite{hertlein_ulbrich:18}.

Starting the algorithm as presented in \cite{AN:18,ANR:2015} at the closed-loop stabilizing $K_0=K(\x^0)$, the Nyquist test is used at every new iterate
$K(\x^+)$ to check whether the loop ${\tt feedback}(G,K(\x^+))$, respectively ${\tt feedback}(G_0,K(\x^+)-K(\x))$ for the case of neutral systems,
is stable. If this is not the case, a backtracking step $\x_\alpha =  \x + \alpha(\x^+-\x)$ for $0 < \alpha < 1$ is made such that
$K(\x_\alpha)$ is still stabilizing, and a repelling cutting plane
is included in the bundle, using e.g. the closed-loop sensitivity function
$S(\x) = \|( I+G K(\x))^{-1}\|_\infty$ as a stability barrier function. See \cite[sect. 4]{AN:18}  for details.

\begin{remark}
A special situation occurs if $G,K$ are stable, or if $G_0$ instead of $G$ is used as described in section \ref{G0}.
Here the Nyquist curve $f(j \omega)$ does not at all wind around the origin, and as a rule stays outside a conical or parabolic region
$R_{\alpha,r}=\{s \in \mathbb C: {\rm Im}(s)^2 < \alpha (r-{\rm Re}(s))\}$ for certain $\alpha,r > 0$. The constraint ${\rm Im}(f(\x,j\omega))^2 \geq  \alpha (r-{\rm Re}(f(\x,j\omega)))$
can be easily included in the constrained program (\ref{program}).
This is more reliable than just 
preventing $f(\x,j\omega)$ from  crossing the origin. Note that even when the Nyquist test can be applied directly to $G,K(\x)$, e.g. for parabolic
or first-order hyperbolic systems, it may for numerical reasons be
interesting to apply it in just the same way to $G_0,K$.

A more conservative but robust way to address the same problem is to add a disk-margin constraint of the form $\|S\|_\infty \leq 1/\alpha$. 
\end{remark}

\section{Output-feedback control of a reaction-diffusion equation with input delay}
\label{parabolic}
In this section we discuss a one-dimensional reaction diffusion equation
with delayed Dirichlet boundary control
\begin{align}
\label{diff}
&x_t(\xi,t) = x_{\xi\xi}(\xi,t) + c(\xi)x(\xi,t), \quad t \geq 0, \xi \in [0,L], \notag \\
&x(0,t) = 0,\\
&x(L,t) = u(t-D)\notag
\end{align}
where $x(\cdot,t)$ denotes the state of the system, $u(t)$ the control,
$D$ the delay,
and where we assume that a finite number of measured outputs
$y_1(t)=x(\xi_1,t),\dots,y_m(t)=x(\xi_m,t)$ at sensor positions
$\xi_i\in [0,L]$  are available for control.  A similar
control  scenario is discussed in Prieur and Tr\'elat \cite{prieur_trelat:17} under the assumption of full state measurement.
Related work is for instance Sano \cite{Sano2017}, where $H_\infty$-control of a heat exchanger
is discussed, or
\cite{ANR:2015}, where a reaction-convection-diffusion 
equation with simultaneous boundary and distributed
control and a van de Vusse reactor of a coupled system of
reaction convection-diffusion equations again with combined
boundary and distributed control are discussed without input delay, but with a single
point measurement as output. 

In the present study, 
we strive to control the system with a finite-dimensional output
feedback controller $K(\x)$ of simple structure, which could conveniently be implemented,
and yet gives satisfactory performances in closed loop.

Performance specifications of the reaction-diffusion PDE 
are chosen so that responses to non-zero initial conditions show
reasonable behavior in terms of damping and settling time. This could be addressed 
by an $H_2$-performance specification, but as we show may also
be successfully controlled by way of suitably chosen $H_\infty$-specification.
The latter is advantageous as soon as additional robustness aspects of the design are
called for.

Working incrementally, and starting with the case of a single
measurement at the mid-point $\xi = L/2$,
our analysis indicates that $5$ equidistant measurements are 
enough to achieve good responses against initial conditions, while mere stability
could be assured even on the basis of a single measurement e.g. at $\xi=0$.

In our numerical testing we adopt the choices $L = 2\pi$, $D=1$ and
$c(x) = \frac{1}{2}$ from \cite{prieur_trelat:17}, where the open-loop $F(s)$
has one unstable pole at $s=\frac{1}{2}$, and an infinity of stable double poles at $s_k = \frac{1}{2} - \frac{k^2\pi^2}{L^2}$
following a retarded pattern. This is understood, as the
semi-group of the equation is sectorial \cite[p.~150]{Staffans_book}.  As indicated
in previous chapters, this allows direct application of the Nyquist test
to check $H_\infty$-stability of the closed loop.

\begin{remark}
Well-posedness of the system (\ref{diff}) in the sense of Salamon-Weiss can be deduced from the functional analytic setting
in \cite{prieur_trelat:17}, or from the general approach in \cite{ChengMorris:2003}. 
\end{remark}


The transfer function of (\ref{diff}) can be computed analytically as
\[
G(s,\xi) = \frac{x(\xi,s)}{u(s)} = e^{-s} \frac{e^{\sqrt{s-\frac{1}{2}}\xi }- e^{-\sqrt{s-\frac{1}{2}} \xi}}{e^{\sqrt{s-\frac{1}{2}}L} - e^{-\sqrt{s-\frac{1}{2}}L}},
\]
and for $L=2\pi$ the system has one unstable pole. 

\begin{proposition}  
\label{prop1}
{\em
Suppose a finite-dimensional structured controller $K(\x)$ with $m$ inputs and $p=1$ output is found which stabilizes 
system} (\ref{diff}) {\em internally in the $H_\infty$-sense. Then the closed loop
is even exponentially stable.}
\end{proposition}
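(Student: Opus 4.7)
The plan is to reduce the claim to Theorem~\ref{theorem2}, which converts $H_\infty$-stability of the transfer function into exponential stability of the underlying generator, conditional on the two components $G$ and $K$ individually possessing exponentially stabilizable and detectable state-space realizations. Since, by hypothesis, the controller stabilizes the loop internally in the $H_\infty$-sense, we already have $T \in \mathbf{H}_\infty$, so the work is entirely in checking these two structural conditions.

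For the controller side the verification is immediate: $K(\x)$ is finite-dimensional, so any minimal realization is both exponentially stabilizable and exponentially detectable. The substantive step concerns the reaction-diffusion process $G$. First I would recall that well-posedness of (\ref{diff}) in the Salamon-Weiss sense has already been noted (Remark after Proposition~\ref{prop1}'s setup), so a state-space realization with admissible control and observation operators is available. The generator is sectorial, and its spectrum is $\{1/2 - k^2\pi^2/L^2\}_{k\ge 1}$; for $L=2\pi$ this yields exactly one eigenvalue in $\overline{\mathbb{C}^+}$, namely the simple eigenvalue $1/4$ with eigenfunction $\sin(\pi\xi/L)$. This spectrum decomposition assumption allows a splitting $A = A_u \oplus A_s$ with $A_u$ scalar and $A_s$ generating an exponentially stable semigroup on the stable subspace.

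Next I would verify stabilizability and detectability on the one-dimensional unstable part. For stabilizability, after the standard Fattorini-type lifting that turns the delayed Dirichlet boundary input into an admissible control operator, one checks that the unstable eigenfunction $\sin(\pi\xi/L)$ projects non-trivially onto the boundary actuation at $\xi = L$, so the finite-dimensional part $A_u$ is controllable and pole placement on it yields an exponentially stabilizing feedback. For detectability, the point observations $y_i(t)=x(\xi_i,t)$ separate the unstable mode whenever at least one sensor location $\xi_i \in (0,L)$ does not coincide with a zero of $\sin(\pi\xi/L)$; since $\sin(\pi\xi/L)$ vanishes only at the endpoints $0$ and $L$, this holds trivially for any interior sensor placement considered (and in particular for $\xi_i = L/2$, as well as for any of the equidistant configurations mentioned in the section). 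Combining finite-dimensional stabilization/detection of $A_u$ with the exponential stability of $A_s$ delivers the required exponentially stabilizable and detectable realization of $G$.

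The main obstacle, and the only point requiring care, is the rigorous handling of the delayed Dirichlet boundary actuation: one must confirm that the lifted control operator is admissible in the Salamon-Weiss sense and that the spectral splitting commutes with this lifting, so that the controllability gain on the unstable eigenmode is indeed non-zero. Once this is in place, all hypotheses of Theorem~\ref{theorem2} are fulfilled, and exponential stability of the closed-loop generator follows immediately.
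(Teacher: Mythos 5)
Your proposal is correct in its top-level strategy and it coincides with the paper's proof up to the point where the real work starts: both reduce the claim to Theorem~\ref{theorem2}, both dispose of the finite-dimensional controller $K(\x)$ trivially, and both then must show that $G$ admits an exponentially stabilizable and detectable realization. From there the routes diverge genuinely. The paper does not argue modally at all: it obtains exponential stabilizability of $G$ by citing the explicit infinite-dimensional state feedback constructed in \cite{prieur_trelat:17}, which uses the same (delayed) control input as (\ref{diff}), and it obtains exponential detectability by a duality argument --- the spatial operator is self-adjoint, so in the adjoint system the point measurements become inputs, one of which coincides with the input of (\ref{diff}) \emph{without} the delay, and stabilizability of the undelayed system gives detectability of the original. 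You instead exploit the spectrum decomposition directly: one simple unstable eigenvalue $1/4$ with eigenfunction $\sin(\pi\xi/L)$, controllability of this mode through the boundary at $\xi=L$, and observability of it by any interior sensor. Your route is more self-contained and has the virtue of explaining \emph{why} a single interior measurement already suffices for detectability; the paper's route buys freedom from all admissibility and modal bookkeeping at the price of an external citation, and its duality trick neatly removes the delay from the detectability question altogether.

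The one place where your argument underestimates the difficulty is precisely the delay in the stabilizability half. You present the delay as an admissibility technicality, but ``pole placement on $A_u$'' is not literally available: the actuation reaches the unstable mode only through the delay line, so the closed loop under a static modal feedback is governed by a quasi-polynomial, not by a placed polynomial, and naive pole placement need not stabilize. The conclusion survives because a scalar unstable mode with a known input delay is always exponentially stabilizable (predictor/Artstein reduction, or equivalently the general fact that controllability of the finite-dimensional unstable part of an admissible system satisfying the spectrum decomposition assumption implies exponential stabilizability), but this is exactly the nontrivial content that the paper outsources to \cite{prieur_trelat:17}. If you fill that step in --- either by invoking the predictor construction or by citing the reference as the paper does --- your proof is complete.
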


\begin{proof}
By theorem \ref{theorem2} this 
follows as soon as each of the components
$G,K$ is individually exponentially stabilizable and detectable. Since $K$ is
finite-dimensional it has clearly  an exponentially stabilizable and detectable
state-space model.  For $G$ exponential stabilizability may be deduced from 
\cite{prieur_trelat:17}, because the infinite-dimensional state-feedback controller
the authors construct has the same control input as (\ref{diff}). Exponential 
detectability on the other hand follows from the fact that the differential operator in (\ref{diff})
is self adjoint, and that in the adjoint system the five outputs are turned into 5 inputs, one of which
is the same as the single input in (\ref{diff}), but now without the delay. 
Exponential detectability therefore follows from the fact that
(\ref{diff}) is exponentially stabilizable without the delay.
\end{proof}

\vspace{.1cm}
This still leaves the problem of {\em finding}
a preliminary stabilizing controller $K_0=K(\x^0)$ of the pre-defined structure. 
As we indicated in previous sections, the latter as a rule
requires heuristic methods even for  very simple structures. 
The advantage we have in the case of the present parabolic study (\ref{diff}) is that we can check
via the Nyquist test whether a given controller is stabilizing.


\subsection{Model matching approach}
Model matching is a sophisticated control scenario, where specifications are pursued indirectly. It is covered by algorithm \ref{algo1}, and
we believe it is particularly suited for PDE-control, where models of different grid scales arise naturally. Here we use model matching to address the
reaction of the system to a non-zero initial value. 
The method consists in two steps (a) and (b).

Non-vanishing initial values may be regarded as disturbances $d$ acting on the system state as in Fig. \ref{red_syn}.  We now assume that we have to
regulate against functions $x_0(\xi)\not=0$ of a given bandwidth. In a
first step (a) we therefore compute a reduced state-space model $G_{\rm red}(s)$ of $G(s)$, in which the resolution of the state $x_r(\xi)$ 
reflects the resolution of the potential $x_0(\xi)$ accurately. In the present study we regulate against  initial conditions with resolution comparable to
that considered in 
\cite{prieur_trelat:17}, which leads  us to a finite-difference discretization of (\ref{diff}) with $50$ spatial steps
or states, 
complemented by a $3$rd-order 
Pad\'e approximation of the input delay adding $3$ more states.  
This coarse grid model $G_{\rm red}$ 
is embedded into a plant $P_{\rm red}$  expressing control requirements in terms of damping and settling time in responses to initial conditions.
Here this consists in optimizing the
root mean-square energy value of the output signal $z_r=(z_1,z_2)$ in response to the white noise disturbance $d$ on the state $x_r$:
\begin{eqnarray*}
P_{\rm red}(s) :
\left\{\;
\begin{matrix}
\dot{x}_r &= &A_r x_r &+& d &+& B_2 u_r \\
z_r &=& C_1 x_r &&&+& D_{12} u_r \\
y_r &= &C_2 x_r& +& D_{21} d 
\end{matrix}
\right.
\end{eqnarray*}
where $x_r\in \mathbb R^{54}$  is the reduced state,  $d$ is the exogenous input, understood to represent the impulse caused by the non-zero initial value, 
and $z_r=(W_x x_r,W_uu)$  is regulated similar to what is used in LQG-control, with filters $W_x=I$ and $W_u(s)=\frac{s}{1+s/a}$, $a=100$, the latter adding another state to $x_r$. 
The reduced output $y_r\in \mathbb R^5$ of $G_{\rm red}$ represents the 5 distributed measurements (see Fig. \ref{figResp1}) in the coarse finite-difference
discretization.  Our testing shows 
that five equidistant measurements along $[0,\, L]$ are sufficient to achieve well behaved responses to initial conditions. 

\begin{figure}[ht!]
\includegraphics[scale=1.2]{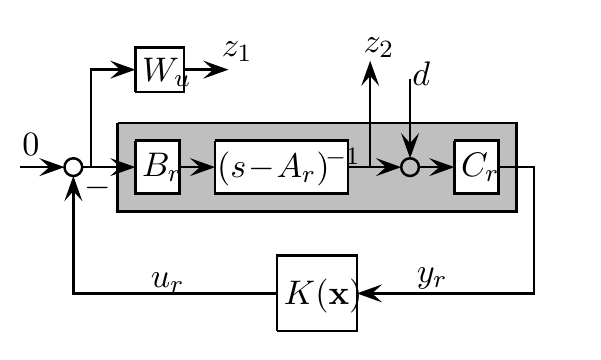}
\caption{Model matching. Preliminary stabilization via structured $H_2$-synthesis using $P_{\rm red}$ with channel
$d \to z_r = (z_1,z_2)=(W_uu_r,x_r)$. \label{red_syn}}
\end{figure}

We then solve the structured $H_2$-optimization
problem for the channel $d\to z_r$ of $P_{\rm red}$ in Fig. \ref{red_syn}, 
using {\it systune} \cite{RCT2013b} based on \cite{AN06a,an_disk:05}. Trial and error
reveals the interesting fact that fairly low-order controllers $K\in \mathscr K_2$ of order 2 are adequate, leading to
$\x \in \mathbb R^{21}$ for $K$:        
\begin{eqnarray}
\label{H2}
\begin{array}{lll}
\mbox{minimize} & \|T_{d \to z_r}(P_{\rm red},K)\|_2 \\
\mbox{subject to} & K(\x) \mbox{ stabilizes } G_{\rm red} \mbox{ internally} \\
&K(\x) \in \mathscr K_2
\end{array}
\end{eqnarray}
where the closed-loop transfer $T_{d \to z_r}(P_{\rm red},K)$ is the lower linear fractional transformation $F_l(P_{\rm red},K)$. 
The solution of (\ref{H2}) is $K_0$ in Figure \ref{figure1} right. 
The difference with traditional $H_2$-control is that $K$ does not have observer structure, but the structure
$K \in \mathscr K_2$ we imposed. 
The resulting controller $K_0\in \mathscr K_2$ is obtained as 
\begin{align}\label{K0}
                K_0^{11}   & =  \frac{ 0.001653 s^2 + 0.822 s + 5.557}{ s^2 + 4.315 s + 18.3}   \notag \\
 K_0^{12}&=
 \frac{
  0.01467 s^2 + 3.125 s + 20.69}
{      s^2 + 4.315 s + 18.3}  \notag \\
 K_0^{13}
&=\frac{0.0221 s^2 + 4.784 s + 31.2}
{     s^2 + 4.315 s + 18.3}\\
K_0^{14}
&=\frac{  0.01733 s^2 + 3.715 s + 24.34}
      {s^2 + 4.315 s + 18.3}\notag \\
K_0^{15}&= \frac{0.00231 s^2 + 0.9017 s + 6.596}
{       s^2 + 4.315 s + 18.3}\notag
\end{align}
A simulation for initial condition $x_0(\xi) = \xi (L-\xi) $ is shown in Fig. \ref{figResp1}  
(left).  While this produces the expected good results for $G_{\rm red}$, 
we now have to
check whether $K_0$ also stabilizes the infinite-dimensional system $G(s)$. It turns out that this is the case, as the Nyquist test reveals, so that we now
proceed to
the second part (b) of the model matching method, where the controller is further optimized with regard to the full model.  



As a result of step (a) of the model matching procedure
we have so far obtained a reference model, ${\tt feedback}(G_{\rm red},K_0)$ and a controller $K_0$ of the desired structure 
which stabilizes $G_{\rm red}$. 
Application of the Nyquist test, in tandem with boundedness of the closed loop transfer function on $j\mathbb R$, show that $K_0$ also stabilizes 
the infinite dimensional system $G(s)$. Fig. \ref{figNyq0} (left) shows one clockwise encirclement, 
computed by the ray-crossing algorithm, 
which due to the known single unstable pole
in $G$, and absence of unstable poles in $K_0$, confirms the absence of unstable closed-loop poles. Taking into account that $G,K_0$ are proper shows
that the closed loop transfer function is bounded on $j\mathbb R$, hence the loop is $H_\infty$-stable, and by Proposition \ref{prop1}, is exponentially stable.

\begin{remark}
The fact that $K_0$ stabilizes not only $G_{\rm red}$, but even $G$, could be called accidental.  However, recall that
within most structures $K \in \mathscr K$, practical methods leading to a stable closed loop are necessarily heuristics, so remain equally accidental. What can be said in favor of 
our method to obtain $K_0$
is that it is the result of a local optimization procedure.
\end{remark}

\begin{figure}[ht!]
\includegraphics[scale=1.2]{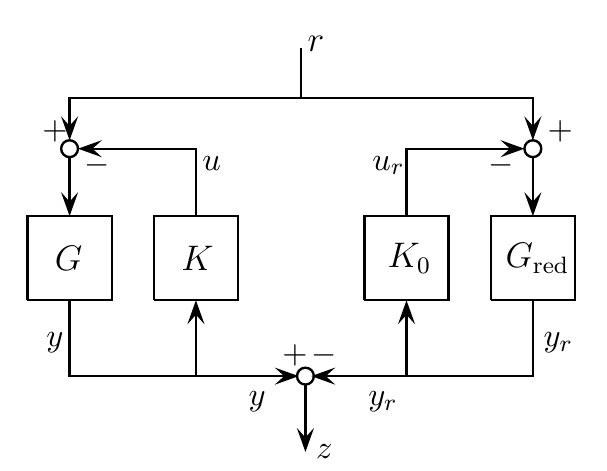}
\caption{Model matching  step (b). $H_\infty$-optimization of channel $r \to y-y_r$ in (\ref{model_matching})
starting at initial guess $K=K_0$ leads to overall optimal $K=K^*$ and is a special case of (\ref{program}). In the reference model on the right $K_0$ 
obtained in step (a) remains now fixed. \label{figure1}}
\end{figure}

In a second step (b) of the model matching procedure, corresponding to step 3 of algorithm \ref{algo1},
the preliminary stabilizing  controller $K_0$ is  refined through an $H_\infty$ model matching problem shown in Fig. \ref{figure1}, 
which takes the true  infinite-dimensional dynamics in $G(s)$
accurately into account. In this step, 
we have to solve an infinite-dimensional structured $H_\infty$-control problem covered by the general form  (\ref{program}), 
\begin{eqnarray}
\label{model_matching}
\begin{array}{ll}
\mbox{minimize} & \|(I+G_{\rm red}K_0)^{-1}G_{\rm red}K_0 - (I+G K)^{-1} G K \|_\infty \\
\mbox{subject to}& K \mbox{ stabilizes } G \mbox{ internally} \\
&K \in \mathscr K_2
\end{array}
\end{eqnarray}
for which we use the bundle algorithm of \cite{ANR:2015,noll:2013,AN:18},
initialized at $K=K_0$.   As a result of optimization the $H_\infty$-norm of the mismatch
channel $r \to z=y-y_r$ is reduced from
$1.81$ at $K_0$ to $0.84$ at $K^*$.

The optimized controller $K^*$ obtained is
\begin{align}
\label{opt1}
K^*_{11}&= \frac{0.1343 s^2 + 0.4535 s + 11.34}{s^2 + 10.66 s + 38.39}\notag \\
 K^*_{12}&=\frac{
  0.52 s^2 + 1.755 s + 45.23}{
 s^2 + 10.66 s + 38.39} \notag \\
 K^*_{13}&= \frac{0.7443 s^2 + 2.621 s + 65.23}{s^2 + 10.66 s + 38.39} \\
 K^*_{14}&= \frac{0.5976 s^2 + 2.036 s + 52.82}{s^2 + 10.66 s + 38.39}\notag \\
K^*_{15}&= \frac{ 0.3446 s^2 + 2.621 s + 20.47}{s^2 + 10.66 s + 38.39} \notag
\end{align}
its simulation is shown in Figs. \ref{figResp1} (right) and the corresponding Nyquist plot  in Fig.  \ref{figNyq0} (right). 
The results resemble  those obtained by studies based on full-state information, see
\cite{prieur_trelat:17}.

\begin{figure}[h!]
\centering
\includegraphics[height=5.1cm]{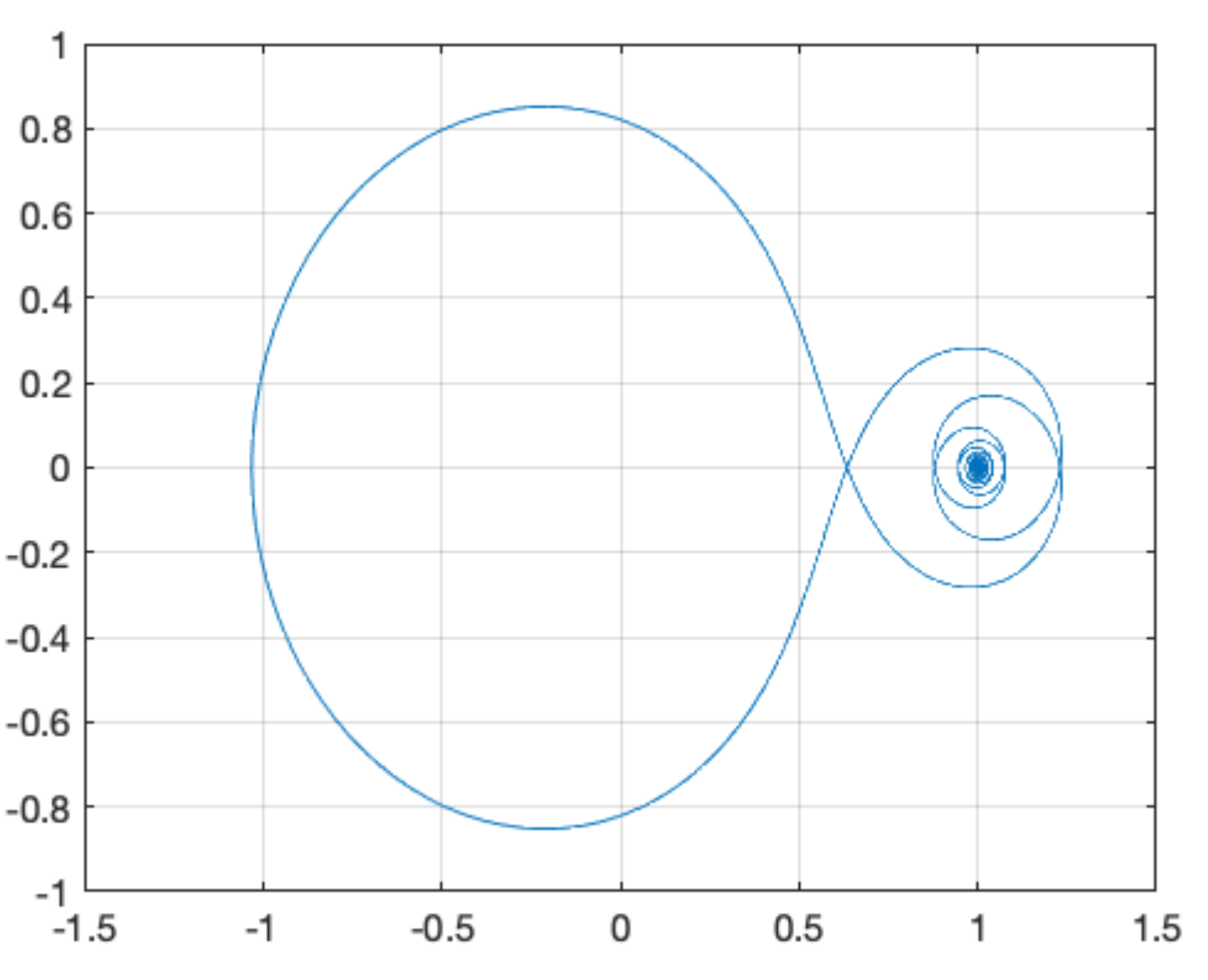}$\!\!\!$
\includegraphics[height=5.1cm]{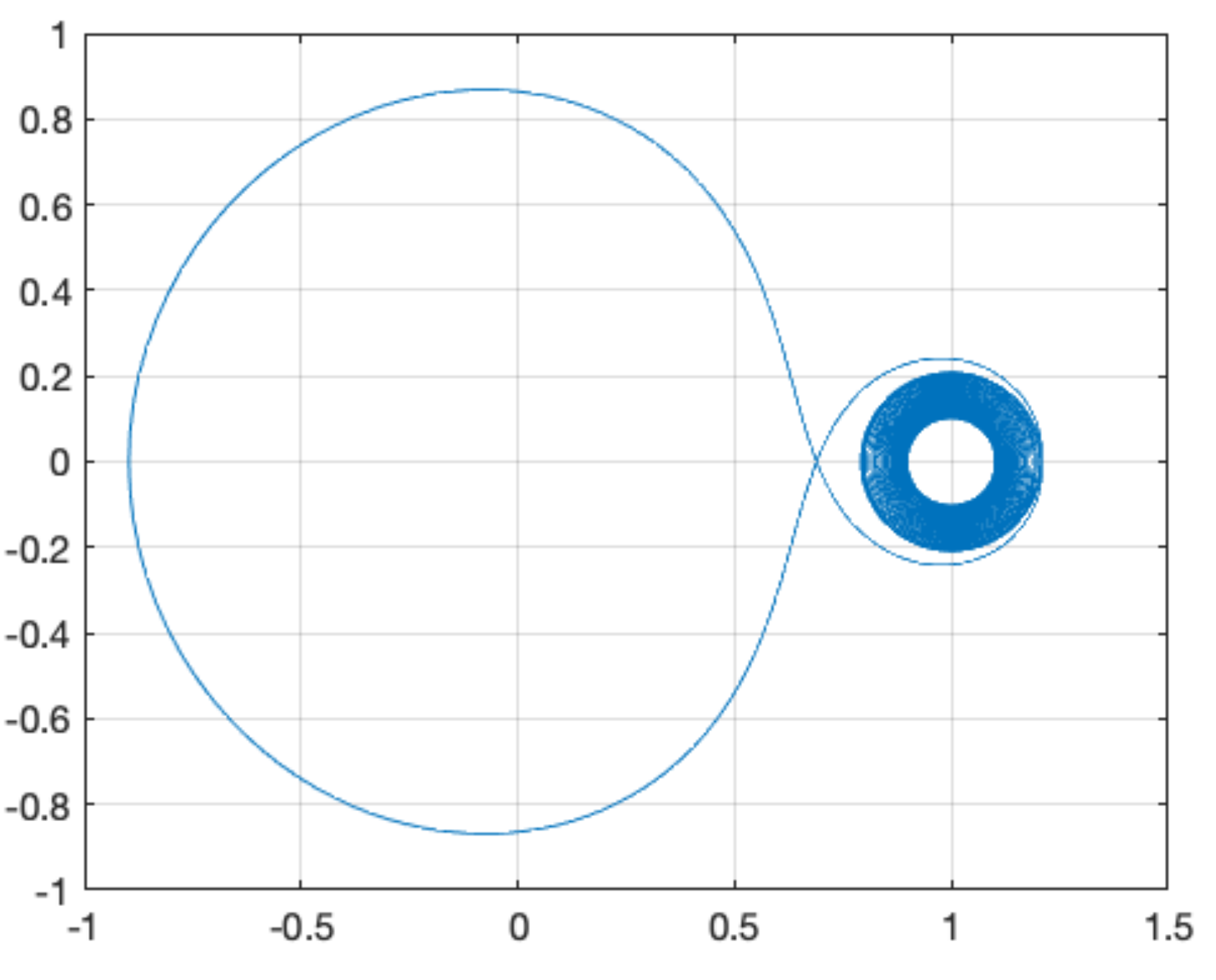}
\caption{Model matching. Nyquist  curve $1+GK_0$ (left) with initial controller (\ref{K0}) and $1+GK^*$ (right).
Since $F(s)$ has one unstable pole,  one counterclockwise encirclement confirms absence of unstable poles
in the loop. \label{figNyq0}}
\end{figure}

\begin{figure}[h!]
\centering
\includegraphics[height=5.3cm]{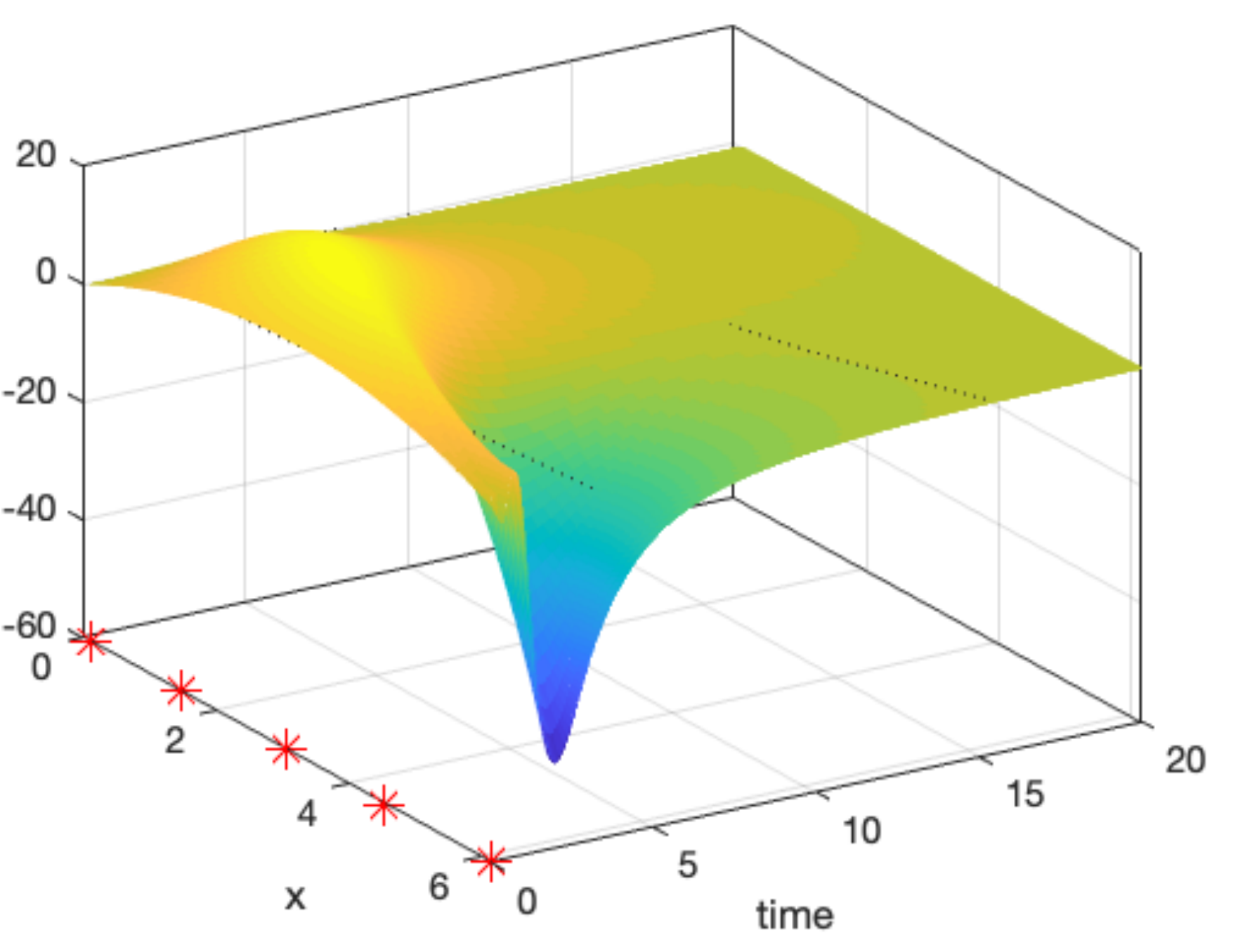}
\includegraphics[height=5.3cm]{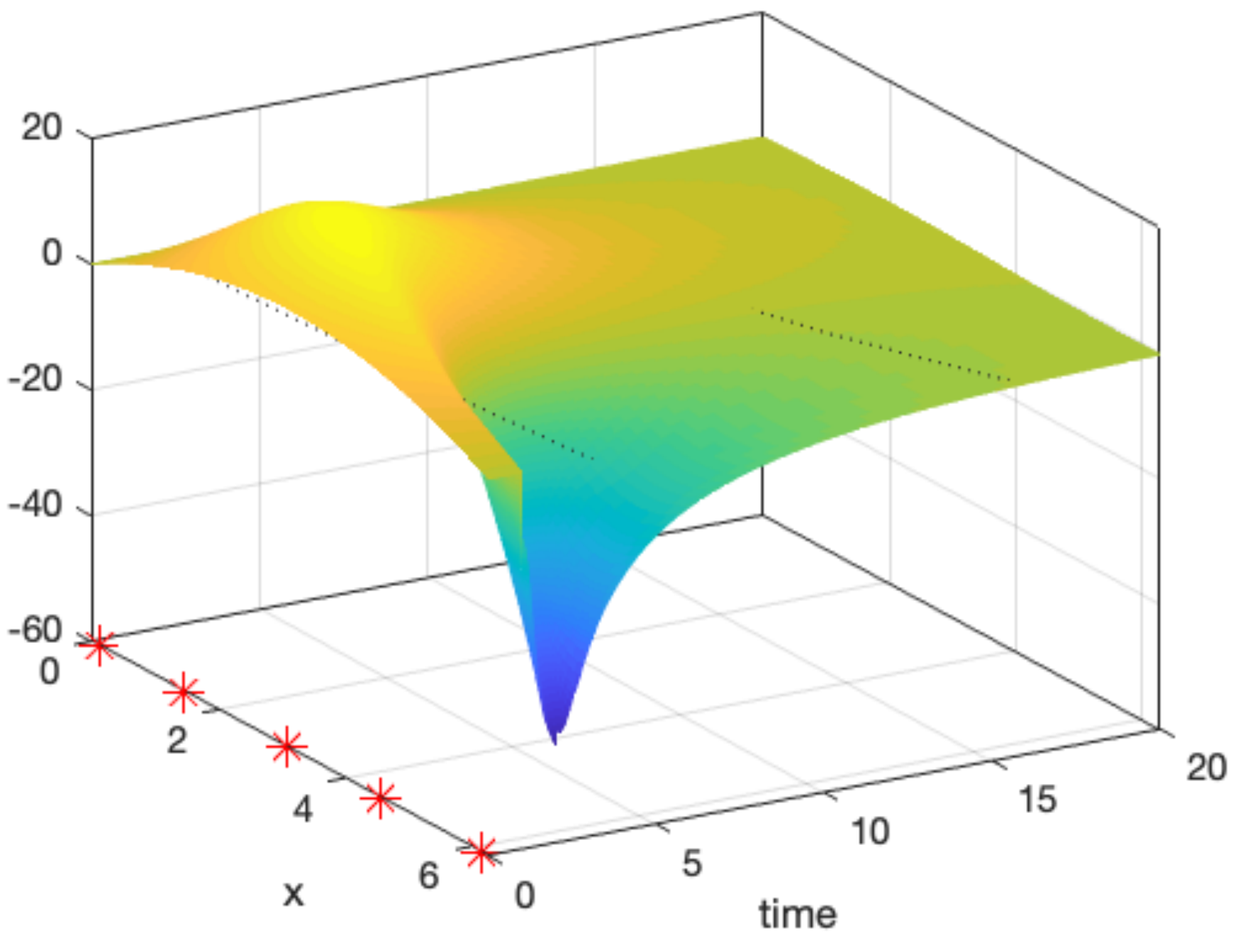}
\caption{Model matching. Simulation of optimized reduced-order system (left) with $K_0$ based on (\ref{H2}) and
 infinite-dimensional system (right) with $H_\infty$-optimal controller $K^*$ based on (\ref{opt1}).  The intermediate result on the left
leads to the final result on the right. The stars '*' indicate sensor positions.  \label{figResp1}}
\end{figure}

\subsection{Mixed sensitivity approach} 
\label{mixed}
A classical approach to improve system performance is via  minimization of the sensitivity function $S:= (I+G K)^{-1}$.  
Using appropriate weighting filters $W_e(s)$, one can achieve better transient  and steady-state responses for all references signals of finite energy. 
Introducing a penalization of the control effort as before leads to a mixed-sensitivity design problem: 
\begin{eqnarray}
\label{program_mixed}
\begin{array}{ll}
\mbox{minimize} & \left\| \left[\begin{array}{c} W_e(s) (I+G(s) K(s))^{-1} \\ W_u(s) K(s) (I+G(s) K(s))^{-1}  \end{array}\right] \right\|_\infty \\
\mbox{subject to} & \mbox{$K$ stabilizes $G$ internally} \\
&K \in \mathscr K_2
\end{array}
\end{eqnarray}
which is a particular case of (\ref{program}).

Actuation takes place at the edge $\xi=L$, and its effect propagates with a unit delay $e^{-s}$ along the spatial dimension. 
The settling time is essentially determined by the slow dynamics that correspond to the states farthest away from the actuation point. 
On the other end, states closer to $\xi=L$ settle faster, but in turn are affected  by much more turbulent transients, as for instance seen in Fig. \ref{figResp20}.  
This suggests shaping response surfaces 
using weightings which
take the distance to $\xi=L$ into account. This leads us to
$W_e = {\rm diag}(c_1,c_2,c_3,c_4,c_5)$ with individually adapted  $c_i$, where for simplicity static weightings are sought. 
Note that $c_1$ corresponds to the edge $\xi=0$, while $c_5$ is associated with $\xi=L$. As before, penalization of the control effort  uses
a high-pass filter $W_u(s) = (s/10)/(1+s/1e3)$. 

\begin{figure}[h!]
\centering
\includegraphics[height=5.1cm]{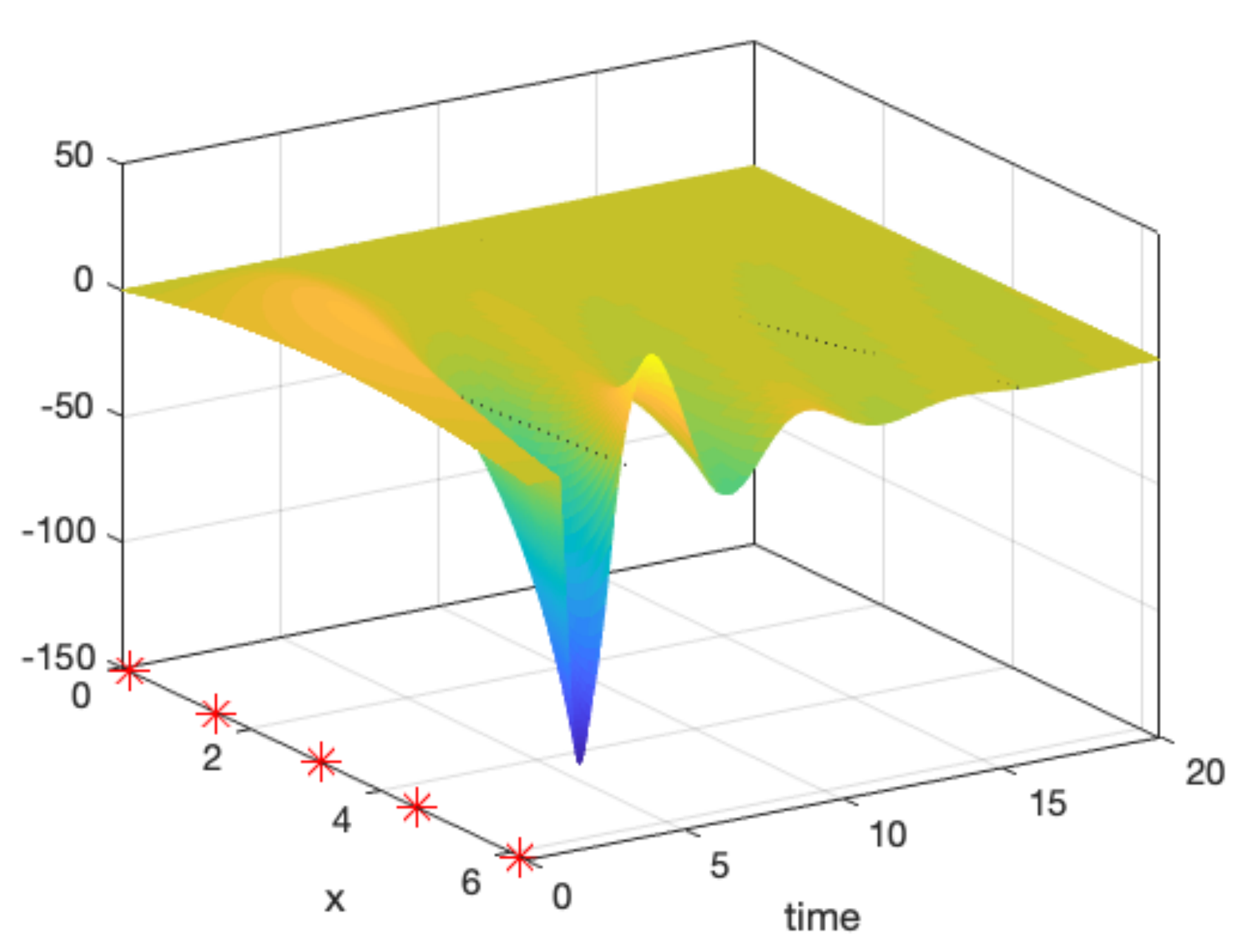}
\includegraphics[height=5.1cm]{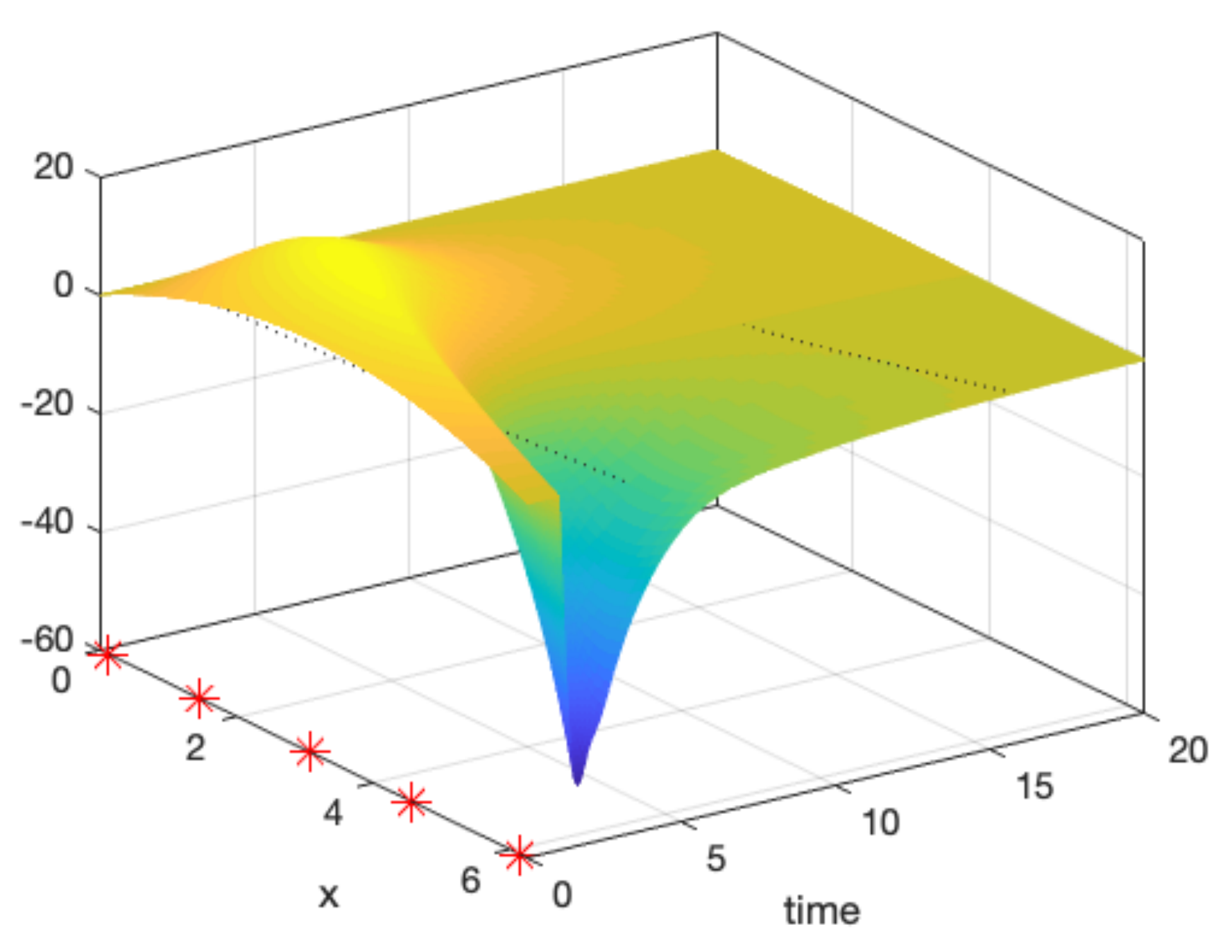}
\caption{Mixed sensitivity. Simulation with $K^*\in \mathscr K_2$ obtained with weight $W_e^{(1)}$ (left).
States near $\xi=L$ show undesirable transient oscillations. Simulation with $K^{**}$ obtained with
$W_e^{(2)}$  (right) proves satisfactory.\label{figResp20}}
\end{figure}

Fig. \ref{figResp20} (left) shows the simulation of $G$ in closed loop with a first optimal controller $K^*\in \mathscr K_2$ given in (\ref{opt2})
obtained via nonsmooth optimization (\ref{program_mixed}) started at $K_0$ from (\ref{K0}) and using $W_e^{(1)} = {\rm diag}(3,0,0,0,0)$. 
States close to the edge $\xi=0$ have excellent  settling times,  but transient 
wobbles manifest themselves at  the opposite end $\xi=L=2\pi$ (see Fig. \ref{figResp20} left). 
\begin{align}
\label{opt2}
K_{11}^*&=\frac{0.0002403 s^2 + 0.3159 s + 2.629}{s^2 + 2.291 s + 19.85} \notag \\
 K_{12}^*&=\frac{0.0125 s^2 + 7.134 s + 37.54}{s^2 + 2.291 s + 19.85} \notag \\
 K_{13}^*&=\frac{-0.02098 s^2 + 6.46 s + 73.02}{s^2 + 2.291 s + 19.85}\\
 K_{14}^*&=\frac{-0.01589 s^2 + 6.447 s + 49.82}{s^2 + 2.291 s + 19.85} \notag \\
 K_{15}^*&=\frac{0.007613 s^2 + 1.283 s + 11.02}{
s^2 + 2.291 s + 19.85}\notag
\end{align}
Increasing the cost at $\xi=L$ via $W_e^{(2)} = {\rm diag}(1,0,0,0,0.2)$, and starting optimization at $K^*$, now
leads to the optimal controller $K^{**}$ given in (\ref{opt3}), which
removes this undesirable effect.  
Simulation of the loop with $K^{**}$ including the control signal  is shown in Fig. \ref{figResp20} (right), the Nyquist plot in \ref{figResp20} (right). 
The  optimal controller is obtained as  
\begin{align}
\label{opt3}
K_{11}^{**}&=\frac{0.00336 s^2 + 0.4678 s + 2.196}{s^2 + 3.731 s + 21.2} \notag\\
K_{12}^{**}&=\frac{-0.002542 s^2 + 6.097 s + 21.47}{s^2 + 3.731 s + 21.2}\notag \\
 K_{13}^{**}&=\frac{0.08966 s^2 + 3.947 s + 33.65}{s^2 + 3.731 s + 21.2}\\
 K_{14}^{**}&=\frac{ -0.01911 s^2 + 5.889 s + 27.07}{s^2 + 3.731 s + 21.2}\notag \\
 K_{15}^{**}&=\frac{-0.006395 s^2 + 0.7398 s + 5.143}{s^2 + 3.731 s + 21.2} \notag
 \end{align}
Finally, note that it is possible to obtain even faster responses by accepting more aggressive control signals and therefore obtaining a 
more academic than practical solution. By proposition \ref{prop1}
all controllers obtained are exponentially stabilizing.

%


\begin{figure}[h!]
\centering
\vspace*{-.1cm}
\includegraphics[height=5.1cm]{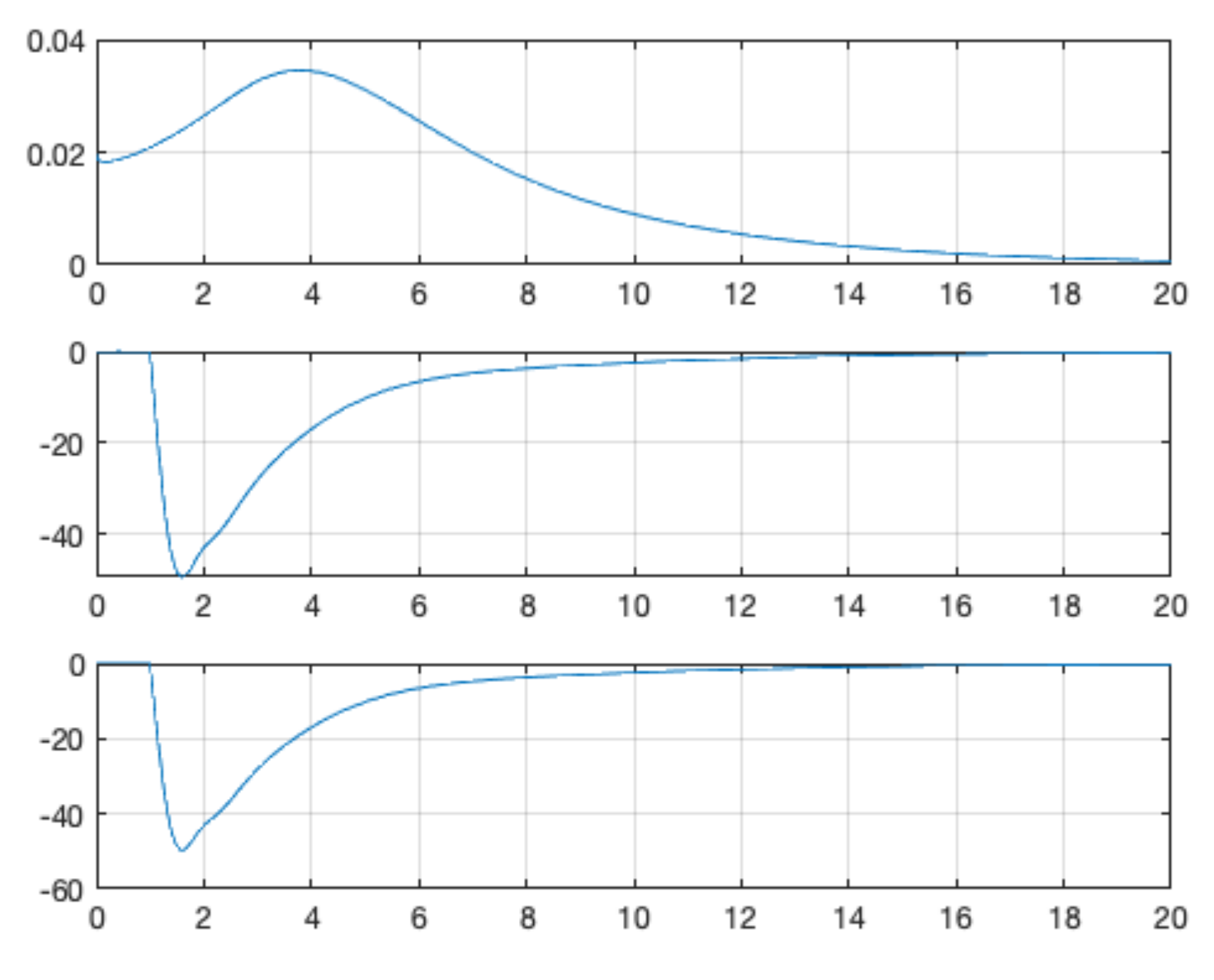}
\includegraphics[height=5.3cm]{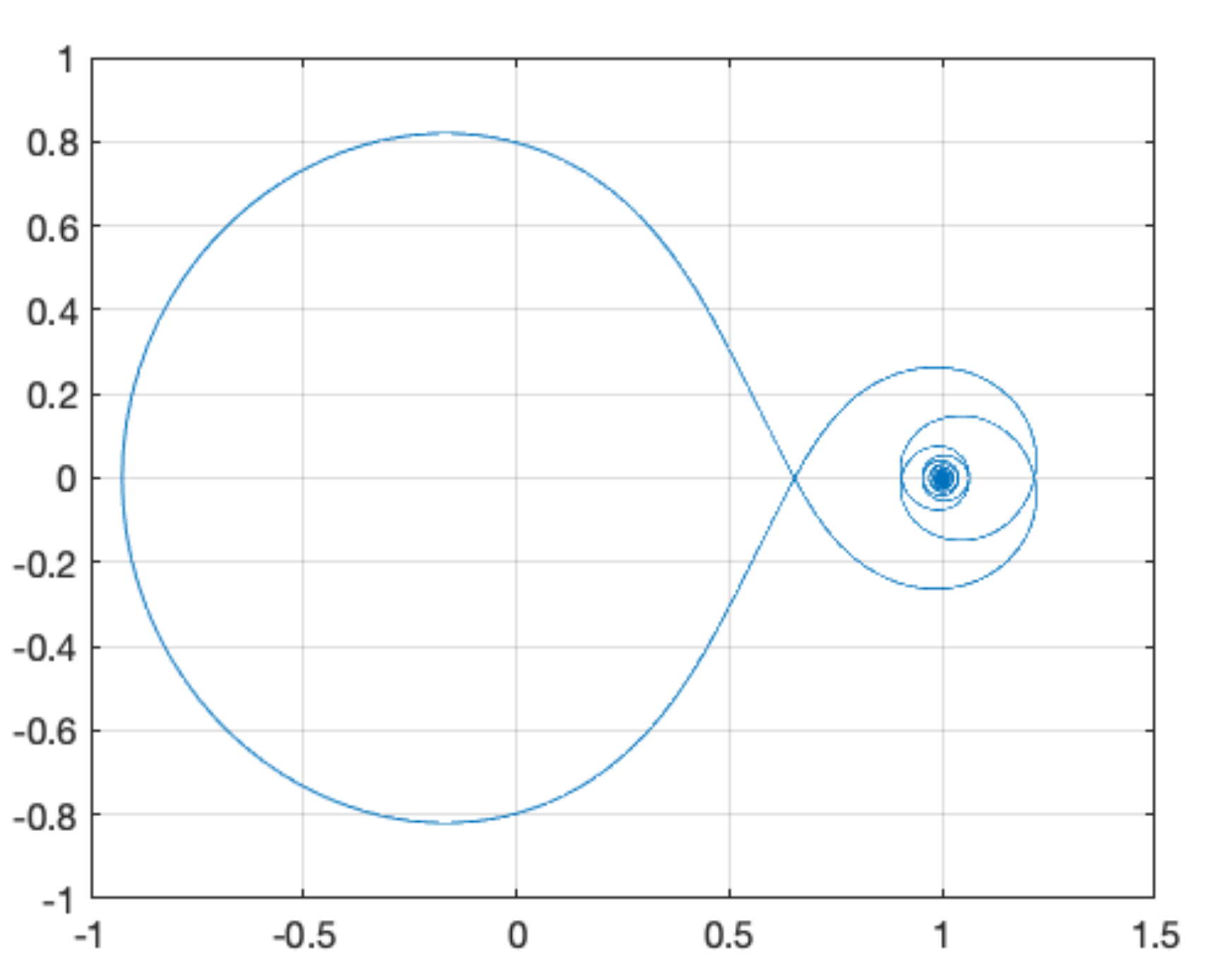}
\caption{Mixed sensitivity. Left. Simulations with controller $K^{**}$: top $x(L,t)$, middle $x(0,t)$ and bottom control signal $u(t)$. 
Right. Nyquist curve of final $K^{**}$.\label{figControlSignal}}
\end{figure}

Altogether this study shows that by way of program (\ref{program}) it is possible to conveniently control
the reaction-diffusion equation (\ref{diff}) with a single input with delay and 5 distributed measurements by synthesizing a finite-dimensional low-order controller  
such that the result matches the result obtained in \cite{prieur_trelat:17} using full state feedback.

\section{Control  of an anti-stable wave equation}
\label{sect_wave}
In this second study, we discuss the following boundary control system
\begin{align}
\label{wave}
x_{tt}(\xi,t) &= x_{\xi\xi}(\xi,t), \; t \geq 0, \xi \in [0,1] \notag \\
x_\xi(0,t) &= -q x_t(0,t) \\
x_\xi(1,t) &= u(t),\notag
\end{align}
where $q > 0$,  $q \not=1$. The state of the system is $x(\cdot,t),x_t(\cdot,t)$, the control 
applied at the boundary $\xi=1$ is $u(t)$, and we assume that the measured
outputs are 
$$y_1(t) = x(0,t), y_2(t)=x(1,t) \mbox{  and } y_3(t) = x_t(1,t).$$ 
The system has been discussed previously
in \cite{Fridman:2015}, and  \cite{smyshlyaev_krstic:09,bresch_krstic:14},  where potential applications are mentioned. Its
well-posedness can be seen from the functional analytic set-up in  \cite{smyshlyaev_krstic:09,bresch_krstic:14}, and from the general approach
to well-posedness of 1D hyperbolic systems in \cite{Zwart:2010}.

The transfer function of (\ref{wave}) is obtained from the elliptic boundary value problems
\begin{align}
\label{wave}
s^2x(\xi,s) &= x_{\xi\xi}(\xi,s), \; s\in \mathbb C, \xi \in [0,1] \notag \\
x_\xi(0,s) &= -q sx(0,s) \\
x_\xi(1,s) &= u(s),\notag
\end{align}
which in this particular situation can be solved analytically:
\[
G(\xi,s) =\frac{x(\xi,s)}{u(s)} = \frac{1}{s} \cdot \frac{(1-q)e^{s\xi}+(1+q)e^{-s\xi}}{(1-q)e^s - (1+q)e^{-s}}.
\]
From this general formula the transfer function of (\ref{wave})
is 
$G(s) = \left[ G(0,s);G(1,s);sG(1,s)\right] =: \left[ G_1(s);G_2(s);G_3(s)\right]$.
The main challenge in the hyperbolic system (\ref{wave}) is  that along with the unstable pole at $s=0$ it exhibits an infinite number of unstable 
poles on a line Re$(s) = \sigma > 0$. This means that the Nyquist test is not directly applicable.

\subsection{Preliminary stabilization}
Following our scheme in algorithm \ref{algo1},  the first step is to provide a preliminary
stabilizing controller $K_0 = K(\x^0)$ of a simple pre-defined structure.  
We have to stabilize the system
$$G(s) = \begin{bmatrix} \frac{2e^{-s}/(1-q)}{s(1-Qe^{-2s})}  \vspace{.2cm} \\  \frac{1+Qe^{-2s}}{s(1-Qe^{-2s})}  
\vspace{.2cm} \\ \frac{1+Qe^{-2s}}{1-Qe^{-2s}}
\end{bmatrix}
=\begin{bmatrix}G_1(s) \vspace{.15cm} \\ G_2(s)  \vspace{.15cm} \\G_3(s)\end{bmatrix}$$
where $Q = (1+q)/(1-q)$.

A first question is 
whether $G$ can be stabilized by a finite-dimensional controller. 
Ignoring the input $y_2$, which for stabilization is not required, we choose the structure
$K(\x) = \left[ n_1(s)/d(s),0,n_3(s)/d(s)\right]$, with $\x$ gathering the unknown coefficients
of the polynomials $n_i(s), d(s)$ with deg$(n_i)\leq {\rm deg}(d)$. Stability of the closed loop
$T(s)=G(s)/(1+G_1(s)K_1(s)+G_3(s)K_3(s)$ leads to testing whether the quasi-polynomial
\[
(1-q)s(d(s)+n_3(s)) + (1-q)sQ e^{-2s} (n_3(s)-d(s)) + 2n_1(s)e^{-s}
\]
arising in the denominator of $T(s)$ is stable, i.e., has its roots in $\mathbb C^-$. 
While there exist general methods to check stability of quasi-polynomials, cf. \cite{Pont42},  an
 {\em ad hoc}  solution is here to choose $n_3=d$,
whence the quasi-polynomial simplifies to
\[
s d(s) + c(s)e^{-s},
\]
where $c(s) = n_1(s)/(1-q)$ and deg$(n_1)={\rm deg}(c)\leq {\rm deg}(d)$.
If we choose $d(s) = s + x_1$ and $c(s) = x_2s+x_3$, then stability of the loop is
equivalent to stability of the quasi-polynomial
\[
P(s) = A(s) + B(s)e^{-s}, A(s) = s^2 + x_1 s, B(s) = x_2s + x_3,
\]
which is covered by the discussion in \cite{MM:06}.  In their terminology we have $a_0=0$, $a_1=x_1$,
$b_2 = 0$, $b_1=1$, $b_0=1$. We are then necessarily in the case $m=1$, $\mu_0=0$ of \cite{MM:06},
so the quasi-polynomial  $P(s)$ can only be stable if $x_1 > -1$.
Moreover the family $P_h(s) = A(s) + B(s)e^{-hs}$ is stable for all $0 \leq h < h_{\sigma,0}$, where
$h_{\sigma,0}>0$ is determined as follows. Let $\omega_\sigma$ be the positive real solution of
\[
4\omega_\sigma^3 - 2 \omega_\sigma (1-x_1^2) = \textstyle\frac{1}{2} \sqrt{5-2x_1^2+x_1^4}
\] 
and let $h_{\sigma,0}$ be the smallest positive  solution $h$ of
\[
\omega_\sigma h = \arg\left( - \frac{B(j \omega_\sigma)}{A(j\omega_\sigma)}   \right) + 2k\pi, k\in \mathbb N,
\]
where $\arg(\cdot)\in [0,2\pi)$. If we let $x_1 = 1 > -1$, then
$4\omega_\sigma^3 = 1$, $\omega_\sigma = 4^{-3}$,
and we get
\[
h_{\sigma,0} = 4^3 \arg\left( - \frac{x_2j 4^{-3} + x_3}{4^{-6} + 4^{-3}}  \right),
\]
and since our delay is $h=1$, this must now be solved for $x_2,x_3$  so that $h_{\sigma,0} > 1$. For instance
$x_2=-1$ and $x_3 =-4^{-3}$ gives argument $\pi/4$ in the formula, so that
$h_{\sigma,0} = 16\pi > 1$.  The leads to the finite-dimensional stabilizing controller $K_0$ for $G$:
\begin{equation}
\label{K0}
K_0 = \left[  \frac{(1-q) (s+4^{-3})}{s+1}    \; \; 0\;\;1 \right]. 
\end{equation}


A second way to seek preliminary stabilization of (\ref{wave}) is to stick to the form
$K = \left[ \frac{n(s)}{d(s)} \; \;0 \;\; 1\right]$, but allow $n(s),d(s)$ to be quasi-polynomials, trying to simplify
the denominator quasi-polynomial $P$ as much as possible. A very straightforward way is to let
$d(s) =a(s) + e^{-s} b(s)$ with $a(s), b(s)$ polynomials, then the denominator quasi-polynomial simplifies to
$$\frac{e^{-s}}{2(1-q)} \left( s a(s) +  (s b(s)+c(s)) e^{-s}  \right),$$ where $c(s) = n_1(s)/(1-q)$. If we now let $c(s) = -sb(s)$,
then $K$ will be stabilizing in the $H_\infty$-sense as soon as $a(s)$ is stable, because the factor $s$ cancels with the factor $s$ in the numerator. If we choose $a(s) = s+c_0$,
$b(s) = -c_0$ for some constant $c_0 > 0$, then we obtain the controller
\begin{equation}
\label{bresch}
K = \left[ \frac{c_0(1-q)s}{s+c_0(1-e^{-s})}  \;\;0\;\;  1 \right],
\end{equation}
which in  \cite{bresch_krstic:14} was obtained using the back-stepping technique. Since only input and output delays along with real-rational terms arise,
such controllers are implementable, so we are still in line with our general purpose of computing
practically useful controllers.


\subsection{Performance optimization}
\label{perf_opt}
Let us now discuss 
a more systematic way which not only leads to preliminary  stabilizing $K_0$,
but also allows performance optimization. In order to compare with  \cite{bresch_krstic:14}, we 
optimize again against the effect of non-zero initial values, using the output $y\in \mathbb R^3$, and
aiming as before at a convenient implementable controller structure.

We start by putting the system $G$ in feedback with the controller $K_0=[ 0\; 0\; 1]$, which leads to
$\widehat{G} = G/(1+G_3)$, where
$$G(s) = \begin{bmatrix} \frac{2e^{-s}/(1-q)}{s(1-Qe^{-2s})}   \vspace{.2cm}   \\    \frac{1+Qe^{-2s}}{s(1-Qe^{-2s})}   \vspace{.2cm} \\ \frac{1+Qe^{-2s}}{1-Qe^{-2s}}  \end{bmatrix},
\qquad 
\widehat{G}(s)= \begin{bmatrix}  \frac{1}{s(1-q)}  \vspace{.2cm}    \\ \frac{1+Q}{2s} 
 \vspace{.2cm}\\ \frac{1}{2}\end{bmatrix}
+ \begin{bmatrix}  -\frac{1-e^{-s}}{s(1-q)} \vspace{.2cm}    \\ -\frac{Q(1-e^{-2s})}{2s} \\ \vspace{.2cm}\frac{Q}{2} e^{-2s}\end{bmatrix}.
$$
Re-write this as $\widehat{G} = \widetilde{G} + \Phi$, where $\widetilde{G}$ is now real-rational and still unstable, while $\Phi$ gathers the infinite dimensional part,
but is stable. Then we use that  stability of the closed loop $(\widetilde{G}+\Phi,K)$ is equivalent to
stability of  the loop  $(\widetilde{G}, {\tt feedback}(K,\Phi))$, as explained in Fig. \ref{swap1}:

\begin{figure}[ht!]
\includegraphics[scale=1.2]{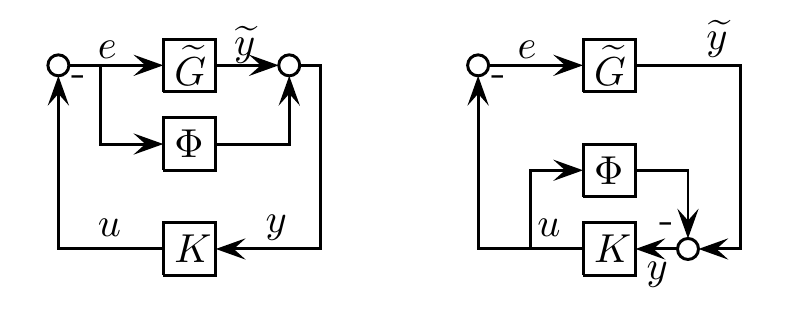}
\caption{Stability of the closed-loop $(\widetilde{G}+\Phi,K)$ is equivalent to stability of the closed-loop $(\widetilde{G},{\tt feedback}(K,\Phi))$.
 \label{swap1}}
\end{figure}

\noindent
Then we construct a finite-dimensional structured
controller $\widetilde{K}=\widetilde{K}(\x)$ which stabilizes $\widetilde{G}$. The controller $K$ is then recovered from $\widetilde{K}$ through the
equation $\widetilde{K} = {\tt feedback}(K,\Phi)$, which when inverted gives
$K = {\tt feedback}(\widetilde{K},-\Phi)$. The overall controller is then
$K^* = K_0 + K$. 

Construction of $\widetilde{K}$ uses {\tt systune} where we use pole placement
via {\tt TuningGoal.Poles} imposing that closed-loop poles have a minimum decay of 0.9, minimum damping of 0.9, and a maximum frequency of 4.0. The controller structure is chosen as
static, so that $\x \in \mathbb R^3$. A simulation with $K^*$
is shown in Fig. \ref{Krstic_controller} (right) and some acceleration over the simulation for  backstepping controller
(left) from the same nonzero initial value is observed.

\begin{figure}[ht!]
\centering
\includegraphics[height=5.2cm]{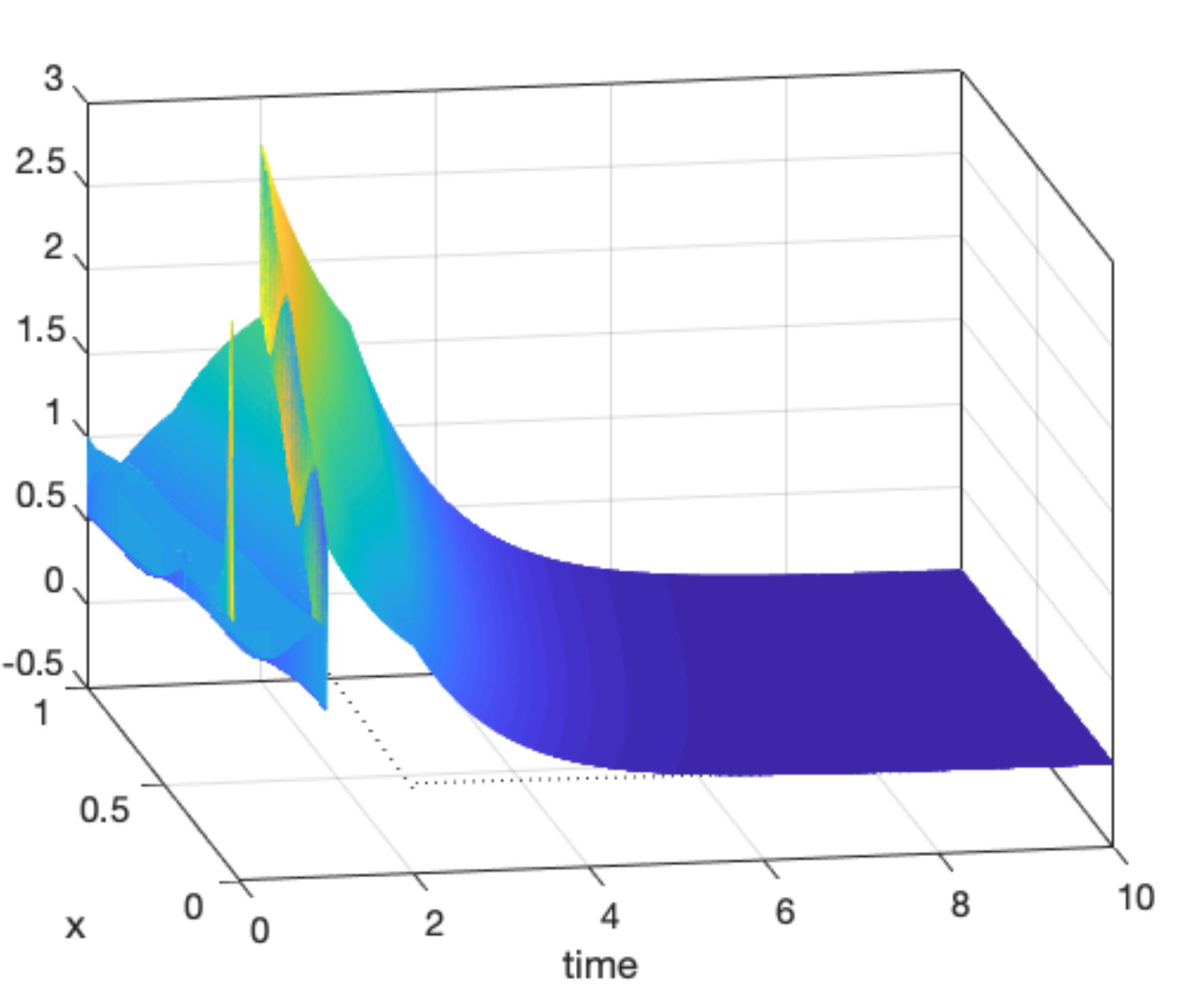}
\includegraphics[height=5.2cm]{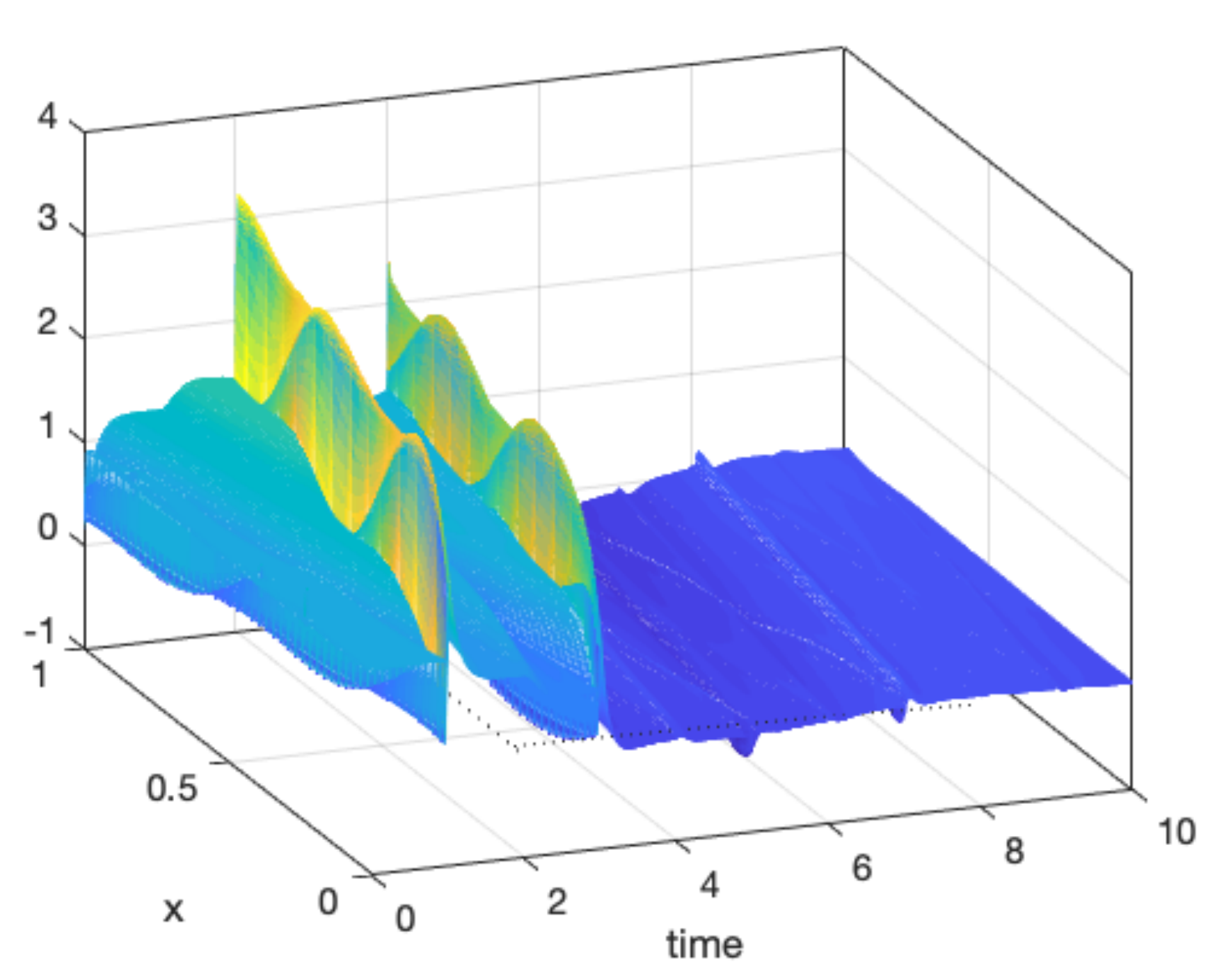}
\caption{Wave equation. Simulations with nonzero initial condition for $K$ obtained by backstepping control (left)
and  $K^*=K_0+K$ obtained by optimizing ${\tt feedback}(\widetilde{G},\widetilde{K})$ via {\tt systune} (right).
Both controllers are infinite-dimensional, but implementable.
\label{Krstic_controller}}
\end{figure}



We recall that the controllers obtained in this section stabilize the system
exponentially, as follows from the scheme on the right of Fig. \ref{swap1}, where $\widetilde{G}$, $\widetilde{K}$
are finite-dimensional and $\Phi$ is stable. This shows that $G$ is exponentially stabilizable and detectable, so that every $H_\infty$-stabilizing
controller is also exponentially stabilizing.  In particular, this applies retro-actively also to the controllers
(\ref{K0}) and (\ref{bresch}).

In more detail, we have to relate the trajectories of the system on the right of Fig. \ref{swap1} to the trajectories
on the left.  Here we can follow \cite{Moelja_Meinsma:03}, using their formulas (13), (14), to the extent that 
exponential decay of trajectories on the right of Fig. \ref{swap1} as assured by the finite-dimensional stabilization
achieved via {\tt systune} leads to exponential decay of trajectories on the left of Fig. \ref{swap1}, which uses of
course exponential stability of the infinite-dimensional part $\Phi$. While in \cite{Moelja_Meinsma:03} work with strictly proper
$G,K$, it suffices for our present argument to suppose that all loops are well-posed.  This is for instance guaranteed for a proper $K$,
$\widetilde{K}$,
since $\Phi$ is strictly proper.

%
%
%

\subsection{Performance with finite-dimensional control}
In this section, we show that the anti-stable wave equation (\ref{wave})
may be regulated satisfactorily with a simple 3rd-order finite-dimensional
controller. We initialize our procedure with the controller
$K_0$ in (\ref{K0}) obtained via the quasipolynomial test. Then
we write the desired structure
$K(\x)$ as $K(\x) = K_0 + K_1(\x)$, where $K_1(\x)=\left[{n_1}/{d}\; {n_2}/{d} \;{n_3}/{d}  \right]$
and deg$(n_i) \leq 2$, deg$(d)=2$, which requires $11$ variables. This is a subclass of 
the class of 3rd-order controllers.

According to  section \ref{G0}, we consider the pre-stabilized system
$G_0=G(I+K_0 G)^{-1}$, build the closed loop
${\tt feedback}(G_0,K_1(\x))$, and 
find an initial $\x_0\in \mathbb R^{11}$ such that $K_1(\x_0)$ is stable and 
$\|K_1(\x_0)\|_\infty < 1/\|G_0\|_\infty$, so that by the Small Gain Theorem the loop
$T(G_0,K_1(\x_0))$ is stable. This is achieved e.g. by
$K_1(\x_0)=n_0/d_0 [1\; 1 \; 1]$ with
$n_0(s)=0.3218 s + 0.0643$, $d_0(s)=s^2+100.1 s+ 10$.  Since $G_0$ has
zero unstable poles, and since $K_1(\x)$ is not allowed unstable poles,
the Nyquist curve $1+G_0K_1(\x)$ turns now zero times around the origin,
and this is maintained during optimization.

We now use the mixed sensitivity approach of section \ref{mixed} again, but under the form
\begin{eqnarray}
\label{finite_dim_K_opt}
\begin{array}{ll} 
\mbox{minimize} & \left \|  \begin{bmatrix} W_e(I+G_0 K_1(\x))^{-1} \\W_uK_1(\x)(I+G_0K_1(\x))^{-1} 
 \end{bmatrix}    
 \right\|_\infty \\
\mbox{subject to} & K_1(\x) \mbox{ stabilizes } G_0 \\
&\x \in \mathbb R^{11}
\end{array}
\end{eqnarray}
where we still have to choose the filters. The $3 \times 3$ filter $W_e$ is 
chosen  diagonal 
$$W_e(s)={\rm diag} \left[ \frac{0.01s+0.5002}{s+0.01429} \; \frac{.99s+0.0007147}{s+0.07941}\; 0.01    \right],$$ where the first entry is a typical low-pass, which corresponds to
the output $e_1$. The transfer $G_2$  is non-minimum phase with unstable zeros
at the positions $-\log(1/2)/2+jk\pi$, $k\in \mathbb Z$, which makes the choice
of the second filter diagonal
element challenging. The above choice turns out to be a good solution, as it forces 
tight control in the high-frequency range  beyond
the first unstable zero at   $-\log(1/2)/2=0.346$. As third weight we choose a simple static
gain $0.01$. The static filter $W_u = 0.01$
serves to avoid unrealistic control  signals.

\begin{figure}[ht!]
\centering
\includegraphics[height=5cm]{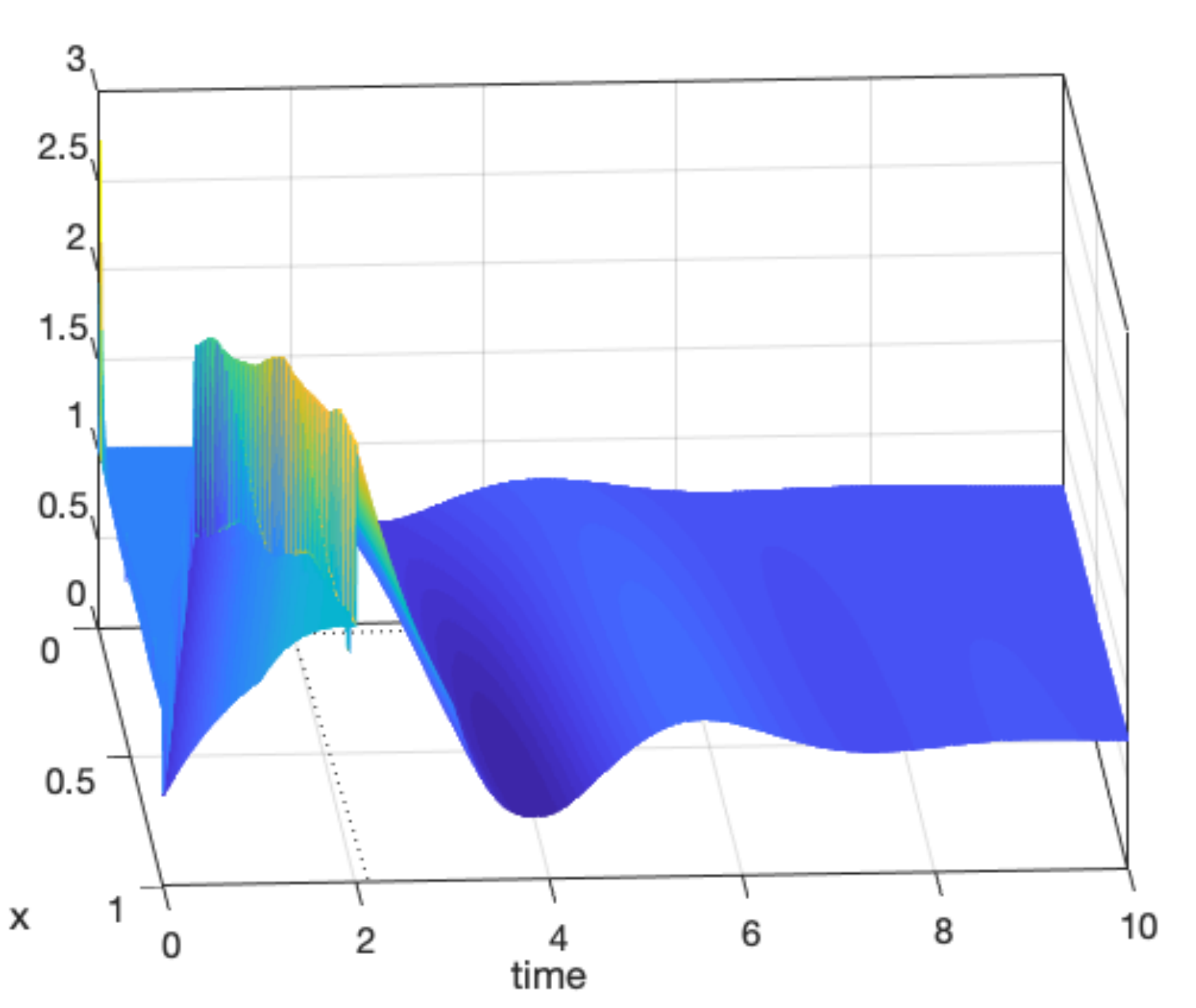}$\quad$
\includegraphics[height=4.4cm,width=5.5cm]{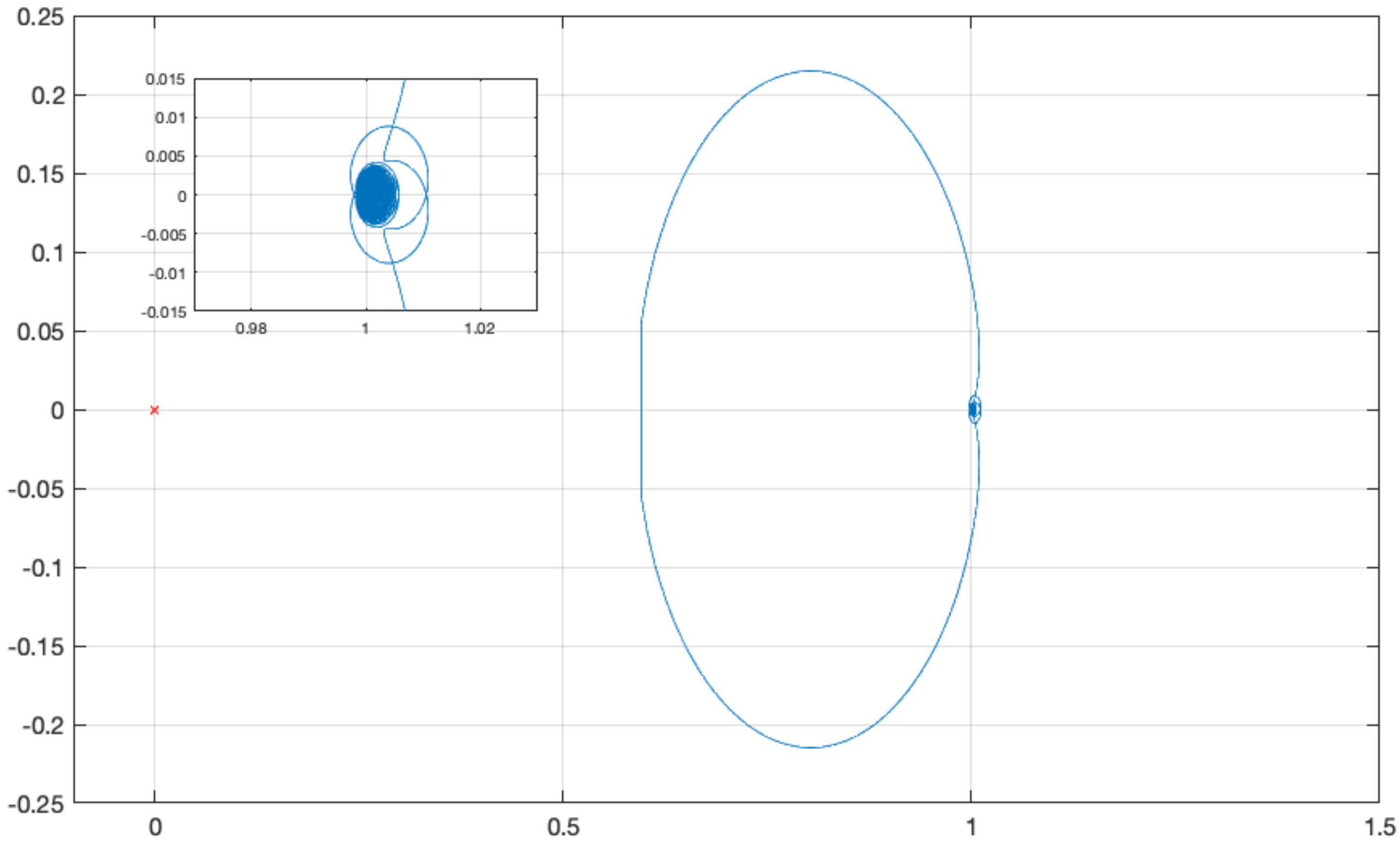}

\includegraphics[height=5cm]{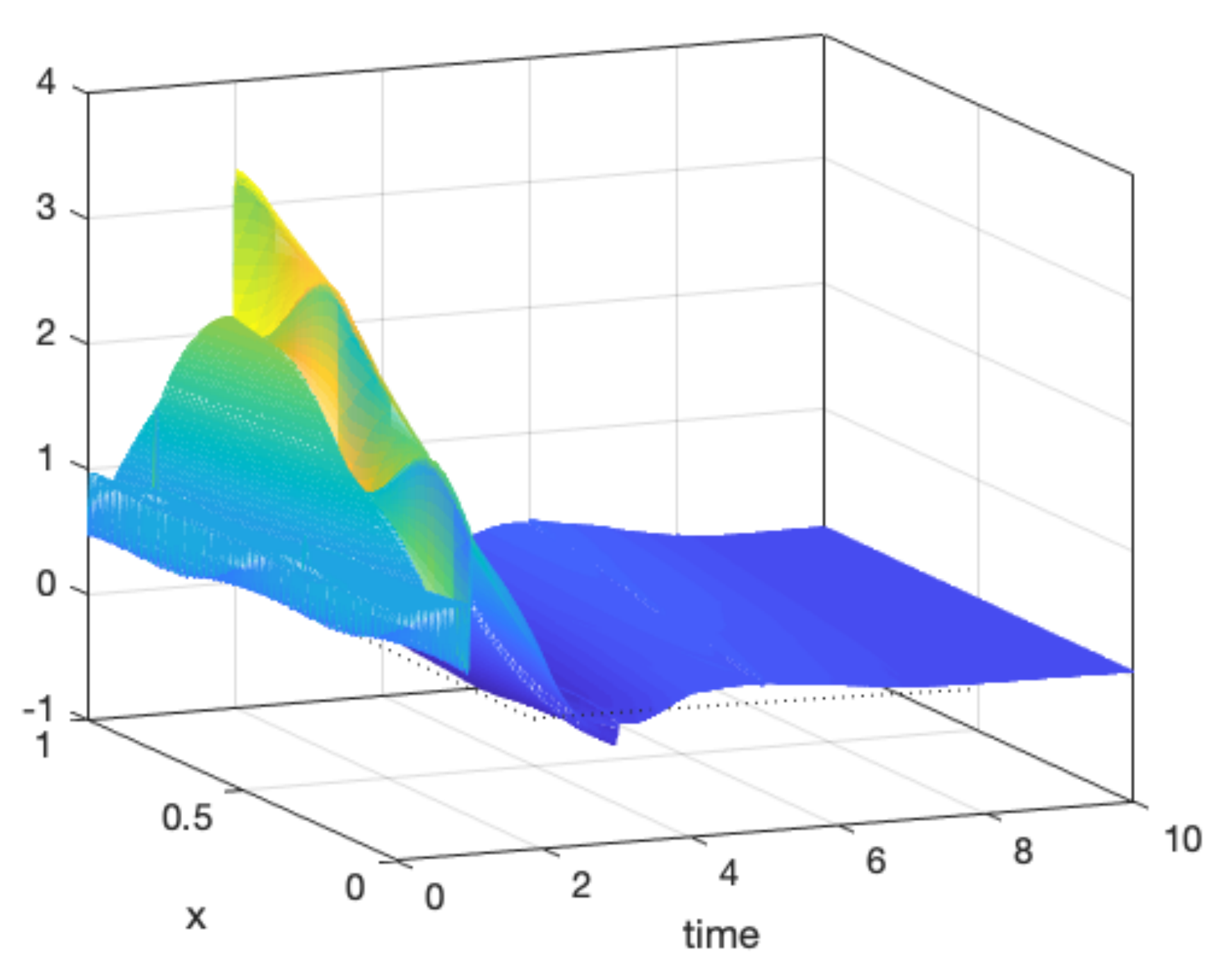}$\quad$
\includegraphics[height=4.6cm,width=5.5cm]{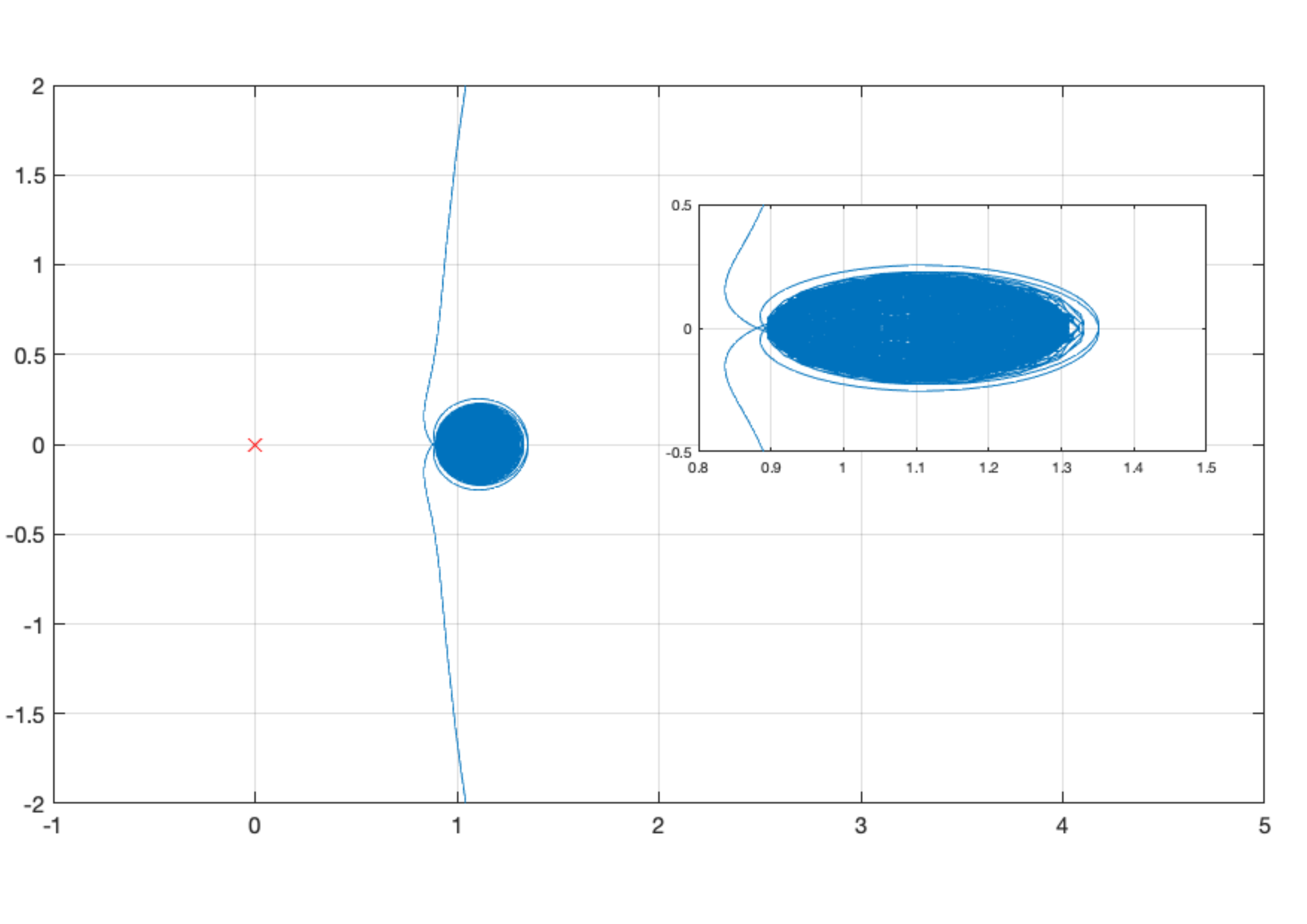}
\caption{Wave equation. Finite-dimensional controllers obtained by mixed sensitivity  in (\ref{finite_dim_K_opt}). Top left initial
$K_0$. Bottom left optimal $K^\sharp=K_0+K_1(\x^\sharp)$.  Top right, Nyquist plot  $1+(K(\x_0)-K_0) G_0$  does not encircle
origin, bottom right Nyquist plot $1+(K(\x^\sharp)-K_0)G_0$. \label{finite_dim_K}}
\end{figure}

The final controller based on (\ref{finite_dim_K_opt}) obtained is
\begin{align}
K_1 &= \frac{-2.992 s^3 - 303.5 s^2 - 104.7 s - 0.488}{s^3 + 102.2 s^2 + 101.7 s + 0.522}   \notag \\
K_2 &= \frac{-0.04494 s^2 - 4.047 s + 0.001097}{s^2 + 101.2 s + 0.522} \\
K_3 &= \frac{1.207 s^2 + 122.7 s + 0.5271}{s^2 + 101.2 s + 0.522}   \notag
\end{align}
The final $H_\infty$-norm in (\ref{finite_dim_K_opt}) was $1.99$,
with  approximately  $1000$ frequencies for both stability and performance. 
In (\ref{finite_dim_K_opt}), we have also constrained the controller to have a
minimum decay rate of $1e-3$ and minimum damping of $0.1$ to keep control on the frequency  inter-sample behavior \cite{AN:18}. 
Furthermore the constraint $|(1+K_1(\x) G_0)^{-1}| \leq 1/0.5$ stands for a disk margin of $0.5$ hence prohibiting  any change in the winding number.

Simulations are shown in Fig. \ref{finite_dim_K}. 
Top left shows simulation with $K_0+K_1(\x_0)$, bottom left shows the optimized controller
$K_0 + K_1(\x^\sharp)$, achieving
faster convergence  and a much smaller
smaller steady-state error beyond 
$4$ sec. 
Simulations of the slices $\xi = 0$, $\xi = 1$ and the control signal are displayed in Fig. \ref{figSlices} from top to bottom and confirm the previous analysis.

\begin{figure}[ht!]
\centering
\includegraphics[height=6cm]{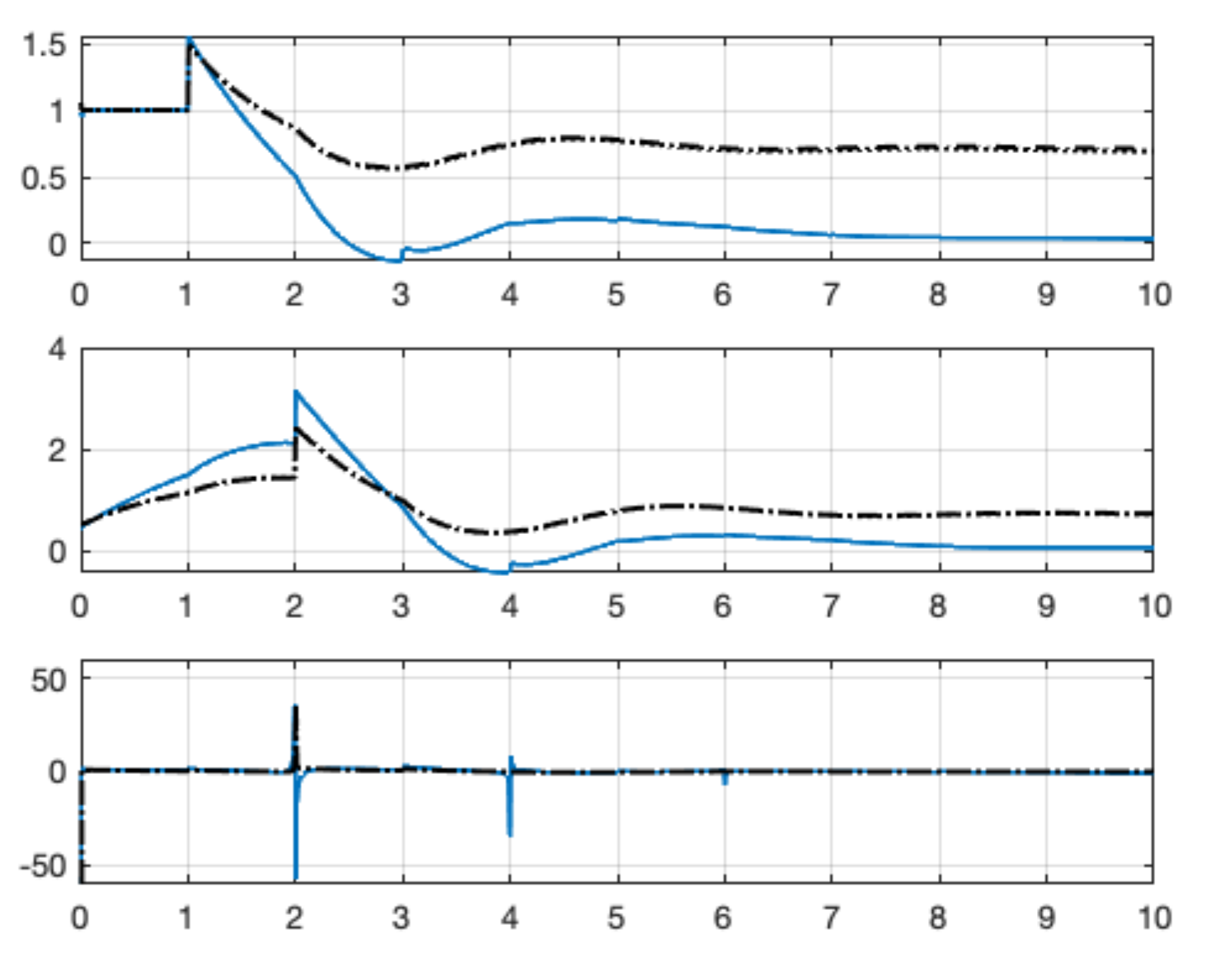}
\caption{Wave equation. Simulations of  slices $\xi = 0$, $\xi = 1$  and control signal from top to bottom.  $K^\sharp$: solid blue, $K_0$: dotted black, $K_0+K_1(\x_0)$: dashed black. 
Simulations of $K_0$ and $K_0 + K_1(\x_0)$ are nearly indistinguishable due to small gain restriction on $K_1(\x_0)$.
\label{figSlices}}
\end{figure}

\subsection{Gain-scheduling control}
Our last study concerns the case where the parameter $q\geq 0$
is uncertain or allowed to vary in time with sufficiently slow variations as discussed in \cite{shamma90}. We assume that
a nominal value $q_0>0$ and an uncertain interval $[\underline{q},\overline{q}]$
with $q_0 \in (\underline{q},\overline{q})$ are given. The authors of \cite{bresch_krstic:14}
schedule their controller (\ref{bresch}) using an adaptive control scheme, where the scheduling
function uses a nonlinear dynamic estimate
$\widehat{q}(t) \in [\underline{q},\overline{q}]$ of the anti-damping parameter.

Based on the approach in section \ref{perf_opt} the following scheduling
scenarios are possible.
(a) Computing a nominal controller $\widetilde{K}$ at $q_0$ as before, and 
scheduling through $\Phi(q)$, which depends explicitly on $q$, so that
$K^{(1)}(q) = K_0+{\tt feedback}(\widetilde{K},-\Phi(q))$. (b) Computing a
$\widetilde{K}(q)$ which depends already on $q$, 
and using $K^{(2)}(q) =K_0+ {\tt feedback}(\widetilde{K}(q),-\Phi(q))$.
(c) Computing a robust controller $K_{\rm rob}$  for the entire interval. 

While (a) is directly based on (\ref{program}) in its finite-dimensional version based on \cite{AN06a,an_disk:05}, see also \cite{AN15},
as available in {\tt systune},  leading to $K^{(1)}(q)$, we show that one can also apply our approach to case (b). We use the reduction of 
section \ref{perf_opt}, see Fig. \ref{swap1}, to work in the finite-dimensional system $(\widetilde{G}(q),\widetilde{K}(q))$, where we now have 
in addition dependency on
$q$, addressed by a parameter-varying  design.

For that we have to decide on
a parametric form of the controller $\widetilde{K}(q)$, which we chose here as
\[
\widetilde{K}(q,\x) = \widetilde{K}(q_0) + (q-q_0) \widetilde{K}_1(\x) + (q-q_0)^2 \widetilde{K}_2(\x),
\]
and 
where we adopted the simple static form
 $\widetilde{K}_1(\x) =[\x_1\; \x_2\; \x_3], \widetilde{K}_2=[\x_4\;\x_5\; \x_6]$, featuring a total of 6 tunable parameters.
The nominal $\widetilde{K}(q_0)$ is obtained via the
synthesis technique in section \ref{perf_opt}. For $q_0 = 3$ this leads to 
$\widetilde{K}(q_0)=[ -1.049 \; -1.049 \; -0.05402]$, obtained
via {\tt systune} as in section \ref{perf_opt}.

With the parametric form $\widetilde{K}(q,\x)$ fixed,  we now use again the feedback system $(\widetilde{G}(q),\widetilde{K}(q))$ in
Fig. \ref{swap1} and design a parametric robust controller using the method of 
\cite{AN:2015}, which is implemented in the {\tt systune} package and used by default if an
uncertain closed-loop is entered. The tuning goals are chosen as constraints on closed-loo poles
including minimum decay of 0.7, minimum damping of 0.9, with maximum frequency $2$.
The controller obtained is (with $q_0=3$)
\[
\widetilde{K}(q,\x^*) = \widetilde{K}(q_0) + (q-q_0) \widetilde{K}_1(\x^*) + (q-q_0)^2 \widetilde{K}_2(\x^*),
\]
with numerical values 
$\widetilde{K}_1 =   [-0.1102, -0.1102, -0.1053]$,  
$\widetilde{K}_2=     [0.03901, 0.03901,  0.02855]$, 
and we retrieve the final parameter varying controller for the system $G(q)$ as
\[
K^{(2)}(q) = K_0 + {\tt feedback}(\widetilde{K}(q,\x^*),-\Phi(q)).
\]
The methods are compared in simulation in Figs. 
\ref{nominal_robust},
\ref{method1},
\ref{method2}.
Comparison of the simulations in Figs. \ref{nominal_robust}, \ref{method1}, and \ref{method2} indicates that the last controller $K_3(q)$ achieves the best
performance for frozen-in-time values $q\in [2,4]$.

\begin{figure}[ht!]
\includegraphics[width=0.32\textwidth]{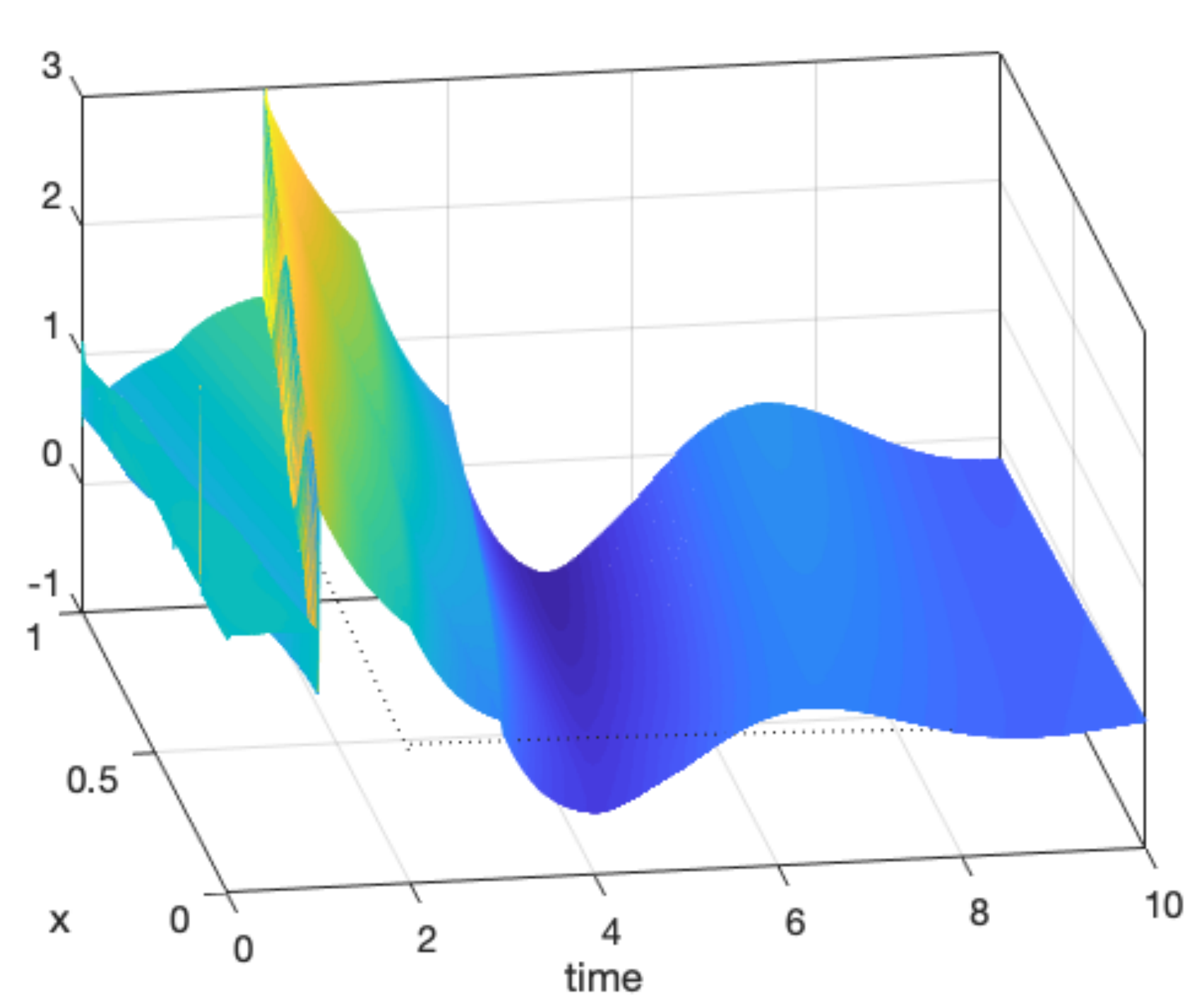}
\includegraphics[width=0.32\textwidth]{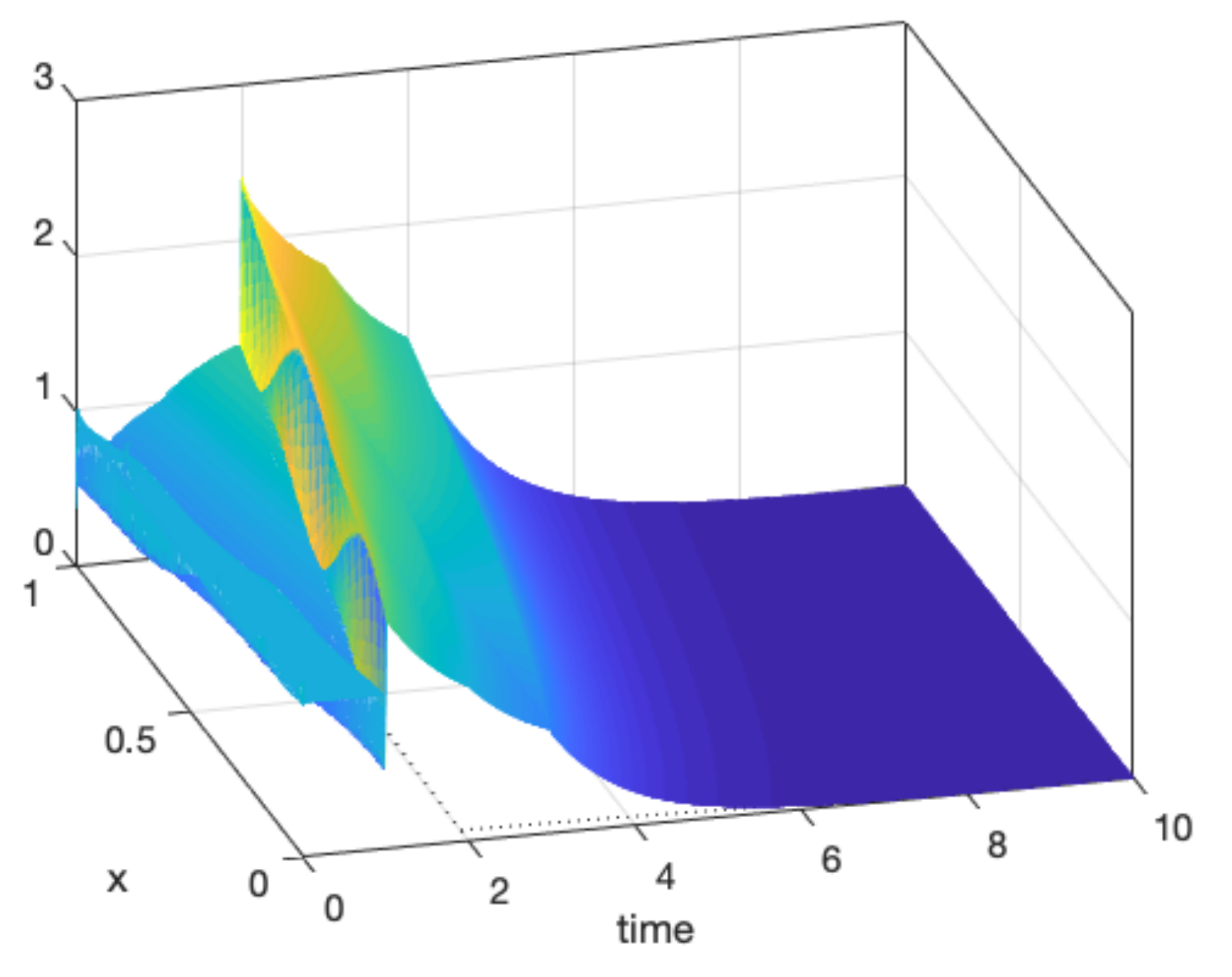}
\includegraphics[width=0.32\textwidth]{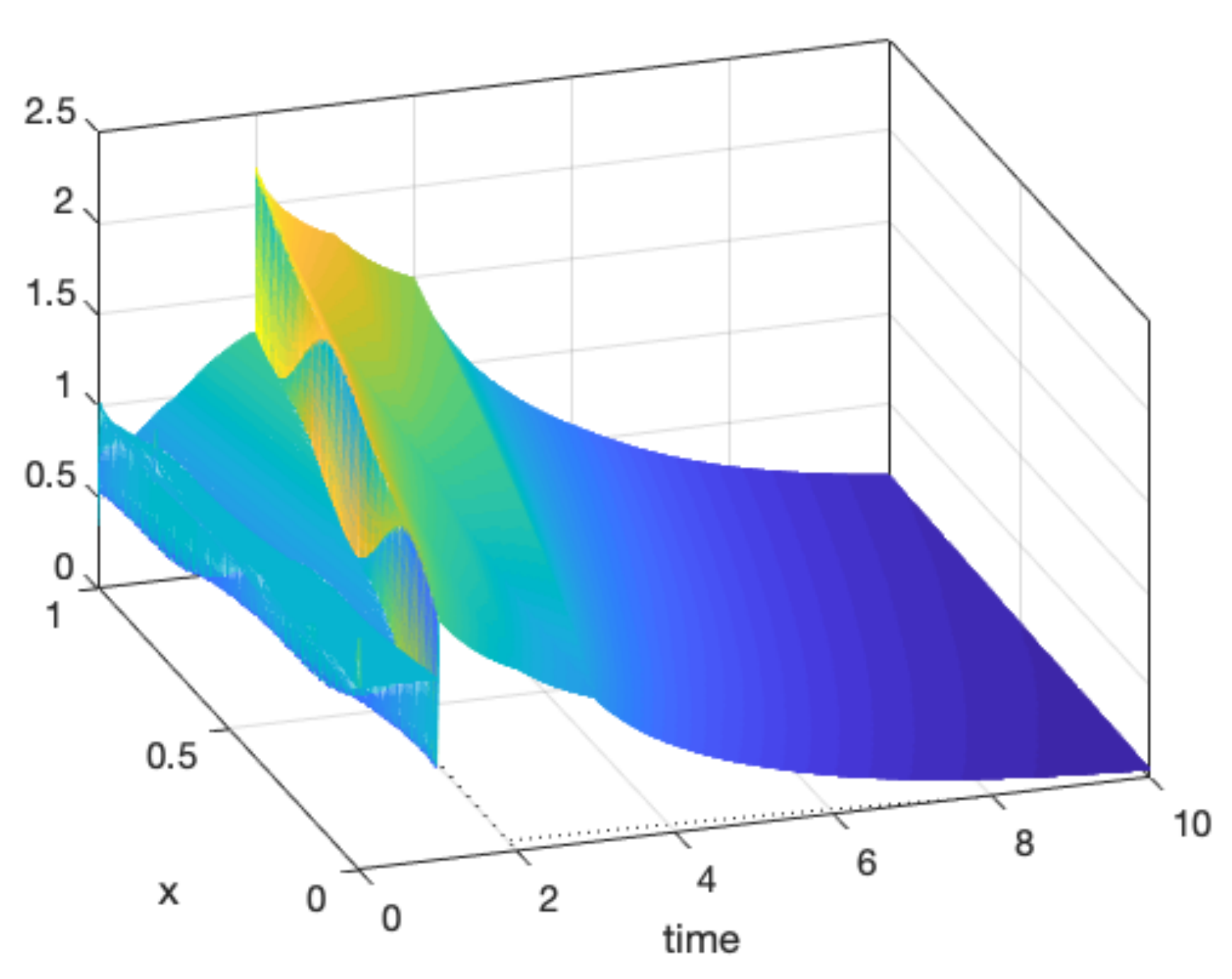}
\caption{Synthesis at nominal $q_0=3$. Simulations of nominal $K=K_0+{\tt feedback}(\widetilde{K},\Phi(3))$  for $q=2,3,4$. Nominal controller is robustly
stable over $[\underline{q},\overline{q}]$.
\label{nominal_robust}}
\end{figure}

\begin{figure}[ht!]
\includegraphics[width=0.32\textwidth]{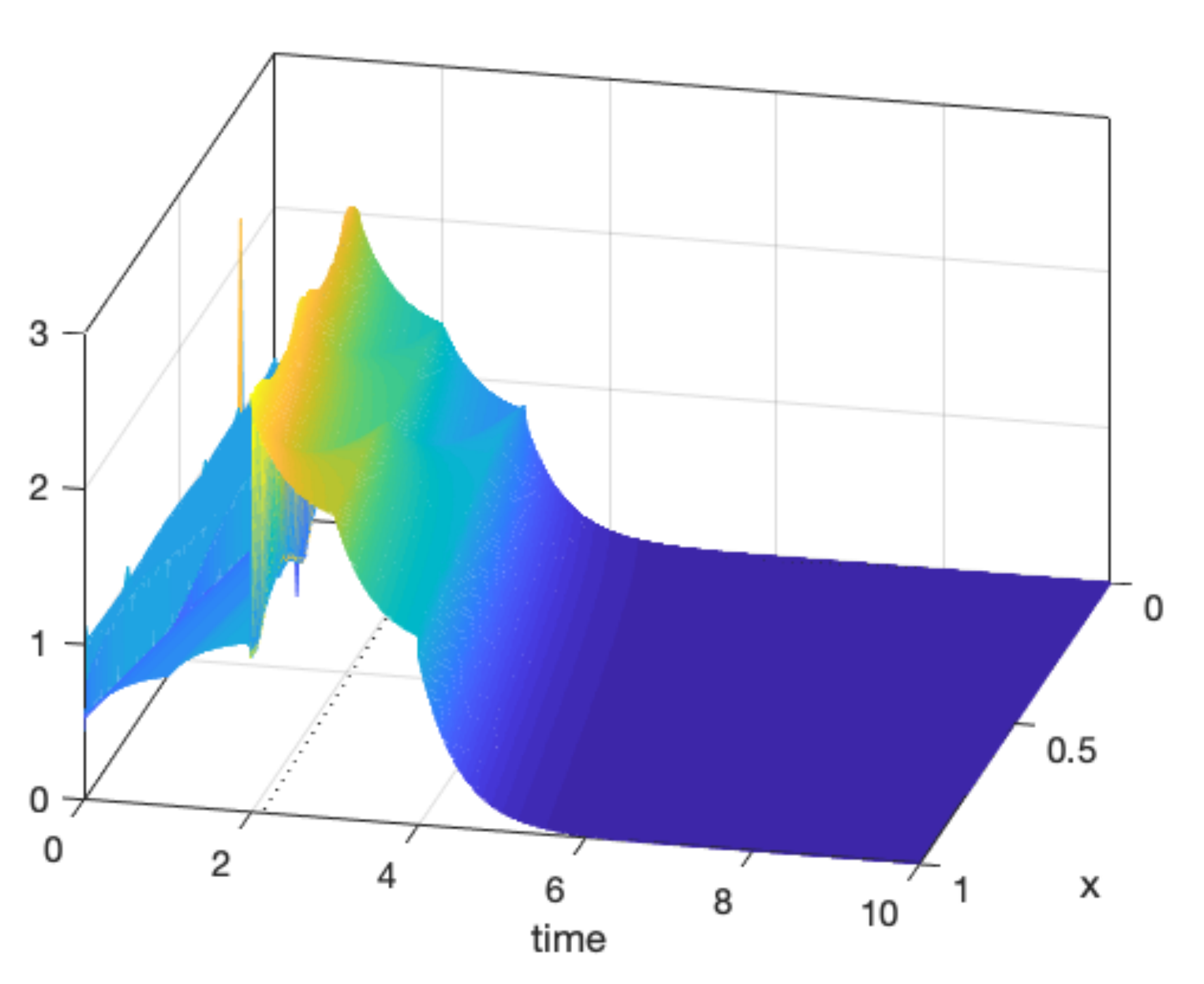}
 \includegraphics[width=0.32\textwidth]{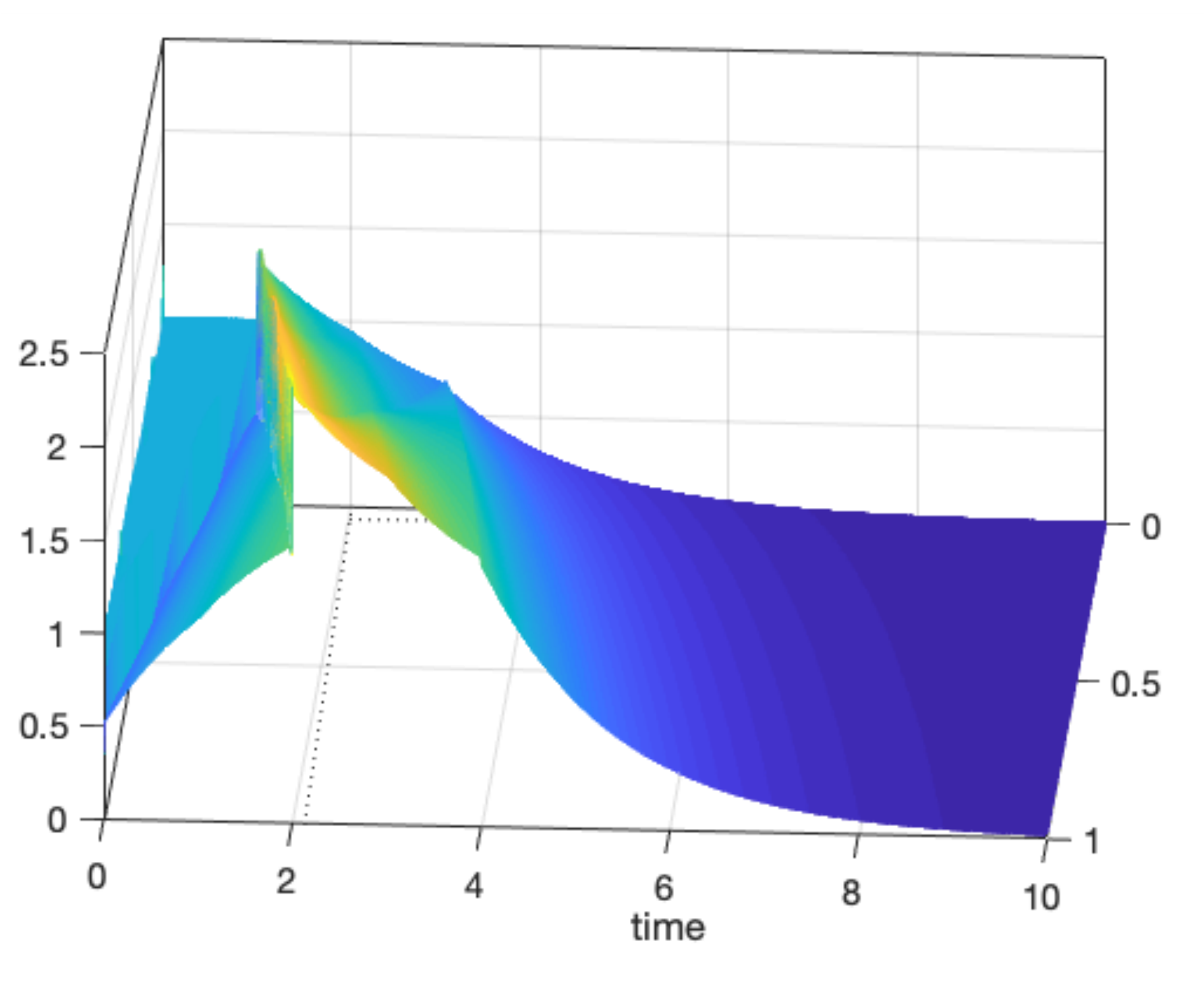}
\caption{Method 1.  $\widetilde{K}$ obtained for nominal $q=3$, but scheduled
$K(q) =K_0+ {\tt feedback}(\widetilde{K},\Phi(q))$. Simulations for $q=2$ left, $q=3$ middle,
$q=4$ right \label{method1}}
\end{figure}

\begin{figure}[ht!]
\includegraphics[width=0.32\textwidth]{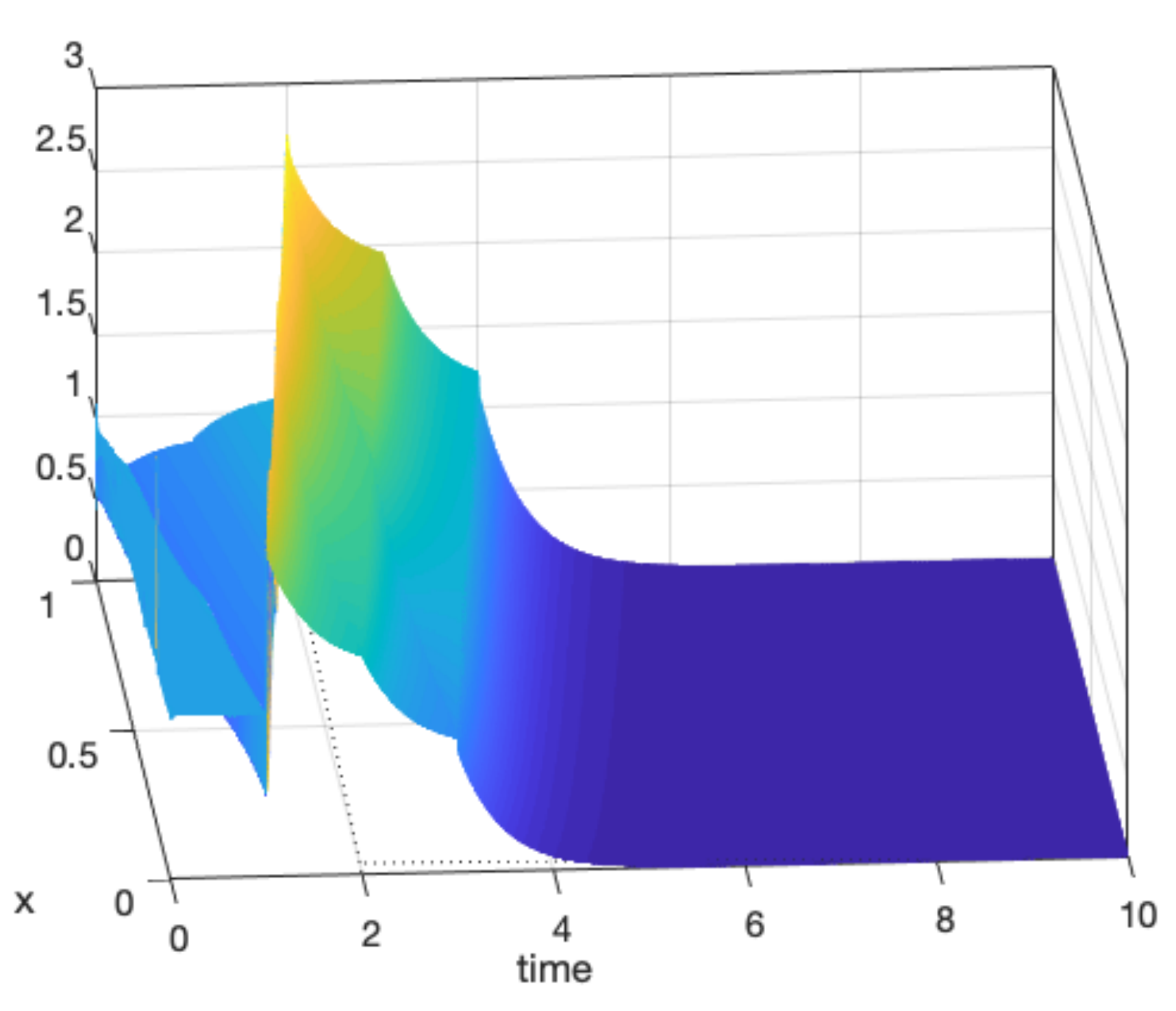}
\includegraphics[width=0.32\textwidth]{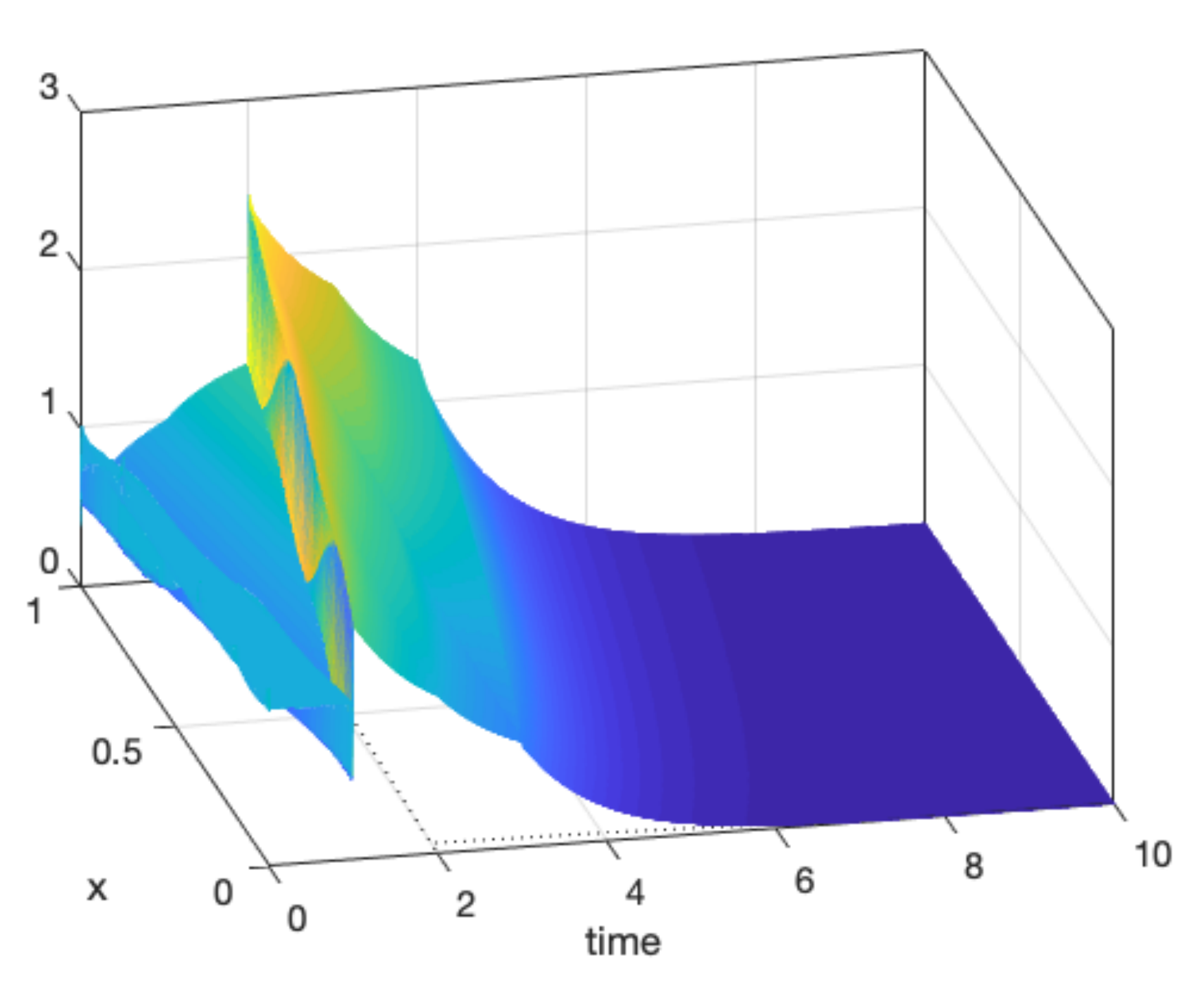}
\includegraphics[width=0.32\textwidth]{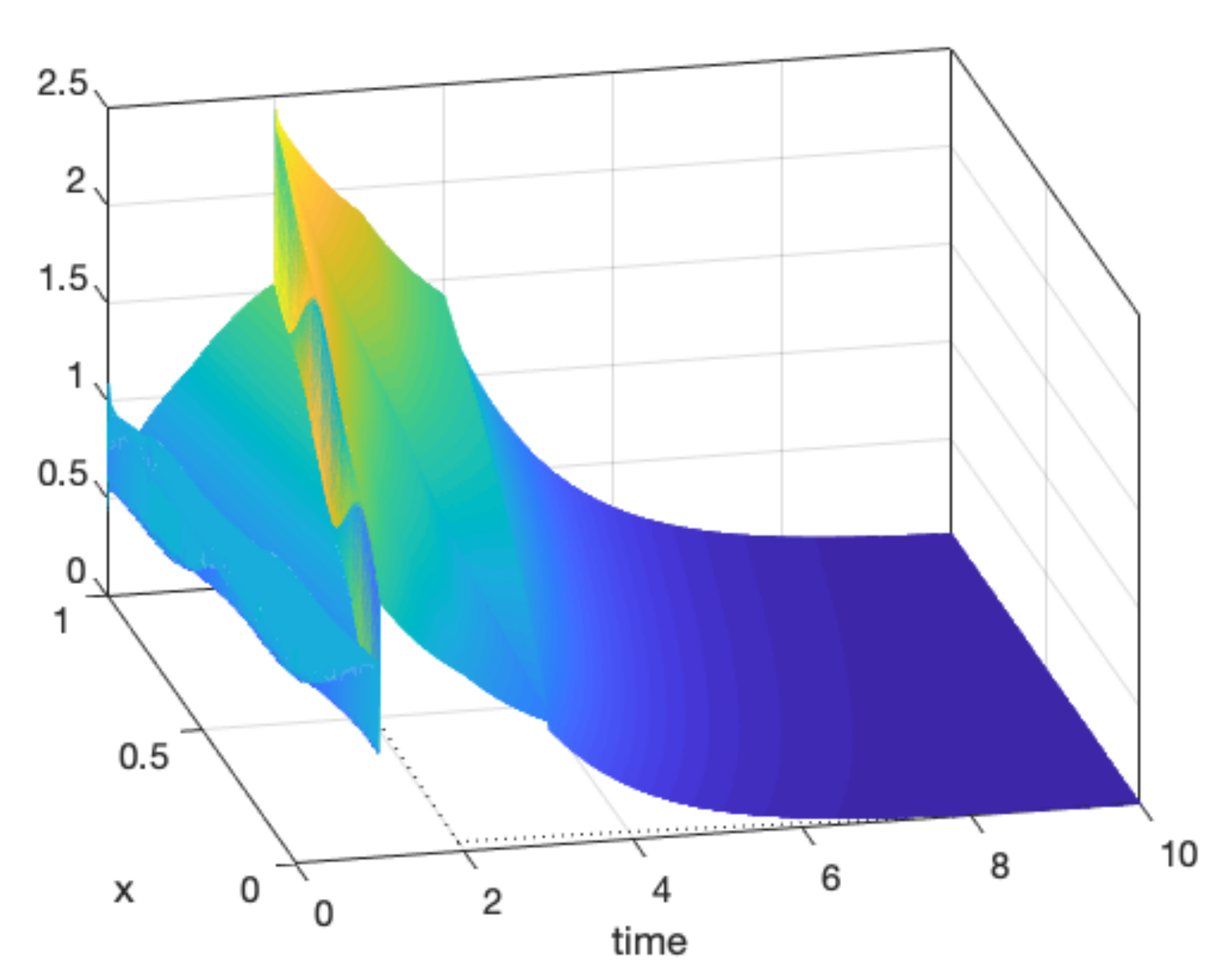}
\caption{Method 2. $\widetilde{K}(q) = \widetilde{K}_{\rm nom} + (q-3)\widetilde{K}_1 + (q-3)^2 \widetilde{K}_2$ and
$K(q) = K_0+{\tt feedback}(\widetilde{K}(q), \Phi(q)$. Simulations for $q=2,3,4$ \label{method2}}
\end{figure}

In conclusion, the study of the hyperbolic system (\ref{wave})
shows that optimization based on the infinite-dimensional program (\ref{program}) is required to synthesize finite-dimensional
controllers for (\ref{wave}), while its finite-dimensional counterpart based on \cite{AN06a} and implemented in {\tt systune} is sufficient
to synthesize infinite-dimensional controllers of the structure covered by  Fig. \ref{swap1}. The major difference with parabolic
systems or first-order hyperbolic systems (see e.g. \cite{ANR:2015}) is that preliminary structured stabilization, based on a suitable heuristic,
cannot be verified using the Nyquist test. A very first stabilizing controller has to be found by way of some
other means, but once this is achieved, the Nyquist test can be brought back to serve to control stability of the loop during optimization.



\end{document}